\documentclass{amsart}
\usepackage{amsmath}
\usepackage{amssymb}
\usepackage{graphicx}
\usepackage{latexsym}
\usepackage{color}
\usepackage{amscd}
\usepackage[all]{xy}
\usepackage{enumerate}
\usepackage{enumitem}
\usepackage{hyperref}
\usepackage{subfigure}
\usepackage{soul}
\usepackage{comment}
\usepackage{epsfig}
\usepackage{epstopdf}
\usepackage{marginnote}
\usepackage{listings}
\usepackage{xcolor}

\usepackage{mparhack}
\usepackage{todonotes}
\usepackage[normalem]{ulem}

\usepackage{tikz}

\usepackage{multirow}

\newtheorem{thma}{Theorem}

\newtheorem{thmb}{Theorem}

\newtheorem{thmc}{Theorem}

\newtheorem{thm}{Theorem}
\newtheorem{lem}[thm]{Lemma}

\newtheorem{prop}[thm]{Proposition}

\newtheorem{theorem}[thm]{Theorem}

\newtheorem{proposition}[thm]{Proposition}
\newtheorem{question}[thm]{Question}

\theoremstyle{definition}

\newtheorem*{definition*}{Definition}

\newtheorem{remark}[thm]{Remark}

\newtheorem{example}[thm]{Example}

\newcommand{\CPb}{\overline{\mathbb{CP}}{}^{2}}
\newcommand{\CP}{{\mathbb{CP}}{}^{2}}

\newcommand{\RP}{{\mathbb{RP}}{}^{2}}
\newcommand{\R}{\mathbb{R}}

\newcommand{\Q}{\mathbb{Q}}

\newcommand{\Z}{\mathbb{Z}}
\newcommand{\SW}{{\rm SW}}

\newcommand{\twprod}{\mathbin{\mathchoice%
    {\ooalign{\raise1.15ex\hbox{$\scriptstyle\sim$}\cr\hidewidth$\times$\hidewidth\cr}}%
    {\ooalign{\raise1.15ex\hbox{$\scriptstyle\sim$}\cr\hidewidth$\times$\hidewidth\cr}}%
    {\ooalign{\raise.85ex\hbox{$\scriptscriptstyle\sim$}\cr\hidewidth$\scriptstyle\times$\hidewidth\cr}}%
    {\ooalign{\raise.65ex\hbox{$\scriptscriptstyle\sim$}\cr\hidewidth$\scriptscriptstyle\times$\hidewidth\cr}}%
    }}

\newcommand{\M}{\rm{Mod}}

\usepackage{scalerel} 

\def \x {\times}

\allowdisplaybreaks[4]

\begin{document}

\title[Exotic knottings and symmetries of surfaces in 4-manifolds] 
{Exotic knottings and symmetries of surfaces in 4-manifolds}

\author[R. \.{I}. Baykur]{R. \.{I}nan\c{c} Baykur}
\address{Department of Mathematics and Statistics, University of Massachusetts, Amherst, MA 01003, USA}
\email{inanc.baykur@umass.edu}

\author[N. Sunukjian]{Nathan Sunukjian}
\address{Department of Mathematics and Statistics,
Calvin University,
Grand Rapids, MI 49506}
\email{nss9@calvin.edu}


\begin{abstract}
We study exotic knottings of surfaces in $4$--manifolds through their
ambient symmetries.  We first give a general recipe for producing
projectively rigid surfaces, for which every smoothly extendable
self-diffeomorphism acts on first homology by $\pm I$.  For every
$g\geq 1$, a refinement of this construction yields a finite sequence
of genus-$g$ surfaces $F_0,\ldots,F_{2g}\subset X_g$,
which are topologically isotopic and topologically flexible---i.e.\ every
orientation-preserving self-diffeomorphism of $F_i$ can be realized by
a self-homeomorphism of $X_g$ preserving $F_i$---whereas successive
knotting rules out more and more projective homological symmetries, revealing a finer knottedness phenomenon.  The first two constructions combine iterated rim surgery with the convex geometry of Newton polytopes of relative Seiberg--Witten invariants. We also use hyperbolic geometry to construct a
totally geodesic surface of positive genus whose smooth and topological extendable
mapping class groups are both trivial.
\end{abstract}

\maketitle

\setcounter{secnumdepth}{2}
\setcounter{section}{0}



\section{Introduction}

Our viewpoint is reminiscent of Klein's Erlangen program
\cite{Klein1893}: we study exotic knottings of surfaces in $4$--manifolds through their ambient symmetries. Our guiding principle is that the loss of ambient symmetries reflects
more knottedness. Indeed, the
diffeomorphism and homeomorphism groups preserving an embedded surface
determine the smooth and topological surface and $4$--manifold pairs, respectively;
see Appendix~\ref{appendix:kleinwasright}. Since these complete
invariants are generally intractable, we compromise by passing to
the subgroups they induce in the better-understood mapping class group of the embedded surface.

Let $F$ be a smoothly embedded closed oriented surface in a closed oriented smooth $4$--manifold $X$.  A diffeomorphism
$\phi\in \rm{Diff}^+(F)$ is said to be \emph{smoothly extendable} if there is a diffeomorphism $\Phi \in \rm{Diff}^+(X)$ such that $\Phi|_F= \phi$.  It is similarly \emph{topologically extendable} if there is a
$\Phi\in\rm{Homeo}^+(X)$ such that $\Phi|_F=\phi$.
By isotopy extension, these yield the \emph{smooth} and \emph{topological extendable mapping class groups}  $\mathcal E^\infty(X,F)$ and $\mathcal E^0(X,F)$, such that
\[
 \mathcal E^\infty(X,F)
 \subseteq
 \mathcal E^0(X,F)
 \subseteq
 \M(F).
\]
These groups depend on the embedding $F\subset X$. We call $F$ smoothly, respectively topologically, \emph{flexible} when $\mathcal E^\infty(X,F)= \M(F)$ and $\mathcal E^0(X,F)= \M(F)$. Throughout the main text, ambient maps preserve orientation but need not be isotopic to the identity. 

The study of extendable mapping classes originated in the work of Montesinos and Iwase and was developed systematically by Hirose \cite{Montesinos1983,Iwase1988,Hirose1993,Hirose2002,Hirose2005,
HiroseYasuhara2008}.  The subject has continued to develop
\cite{DingLiuWangYao2012,LiuNiSunWang2013}, with a marked surge of
activity in recent years
\cite{WangWang2024,Salter2025,LawandeSaha2025,
BanerjeeSalter2025,Liu2026,Lehman2026,LehmanLewis,Niu2026, Pyronneau2026}. Our focus in this paper is on exotically knotted surfaces whose
topological extendable mapping class groups agree, while their smooth
ambient symmetries are increasingly constrained.  In this way,
extendable mapping class groups reveal that some exotic knottings are
more knotted than others.

At the opposite extreme from flexibility, we call $F\subset X$
(smoothly) \emph{rigid} if $\mathcal E^\infty(X,F)=\{1\}$, that is, if no nontrivial mapping class of $F$ is smoothly extendable. We call $F$ \emph{topologically rigid} if
$\mathcal E^0(X,F)=\{1\}$. For a surface of
positive genus, write
\[
 \rho\colon \M(F)\twoheadrightarrow
 \rm{PSp}\bigl(H_1(F;\Z)\bigr)
 :=
 \rm{Sp}\bigl(H_1(F;\Z)\bigr)/\{\pm I\}
\]
for the projectivized homological representation. Here $\rm{Sp}(H_1(F;\Z))$ denotes the group of symplectic lattice automorphisms preserving the algebraic intersection form on $F$.  We call $F$
\emph{projectively rigid} if
$\rho(\mathcal E^\infty(X,F))=\{1\}$.

Our first theorem gives a general construction of such surfaces.

\begin{thma}[Projective rigidity]
\label{thm:a}
Let $F\subset X$ be a smoothly embedded closed oriented surface of
genus $g\geq1$ in a closed simply connected oriented smooth
$4$--manifold.  Suppose that $F^2\geq0$,
$\pi_1(X\setminus\nu F)=1$, and $(X,F)$ has nonzero relative
Seiberg--Witten invariant.  Then $F$ admits a topologically isotopic,
projectively rigid exotic copy $F'\subset X$.
\end{thma}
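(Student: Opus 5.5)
The plan is to build $F'$ by iterated rim surgery on $F$ (Fintushel--Stern rim surgery, twisted if necessary). Choose a symplectic basis $a_1,b_1,\dots,a_g,b_g$ of $H_1(F;\Z)$, take rim tori $T_{c}$ lying over curves $c$ in $F$, and perform rim surgery along several of them using knots $K_c$ with pairwise distinct, "generic" Alexander polynomials. Since $\pi_1(X\setminus\nu F)=1$, the result of each rim surgery is topologically isotopic to the original surface. This uses the standard Kim--Ruberman type argument: the complement stays simply connected and the pair is topologically standard. So $F'$ is topologically isotopic to $F$. The condition $F^2\ge 0$, together with the nonzero relative Seiberg--Witten invariant, is what makes the relative SW invariant of $(X,F')$ computable by the gluing formula.

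That invariant should come out as the invariant of $(X,F)$ multiplied by $\prod_c \Delta_{K_c}(t_{c})$. Here $t_c$ is the variable attached to the class in $H_2(X\setminus\nu F',\partial)$, or in the relative $H_2$, that is dual to the rim torus. These rim classes are naturally indexed by $H_1(F;\Z)$, via the circle bundle over $c$.

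Next, suppose $\Phi\in\mathrm{Diff}^+(X)$ preserves $F'$. It then acts on the complement and on $\partial\nu F' = $ circle bundle over $F'$, inducing on the rim classes the action of $\phi_*\in\mathrm{Sp}(H_1(F))$, up to the sign ambiguity coming from orientations. Naturality of the relative SW invariant says that $\Phi$ preserves the invariant up to sign and up to the conjugation symmetry. Therefore $\phi_*$, acting on the group ring of $H_1(F)$, must carry the Laurent polynomial $P=\SW_{(X,F)}\cdot\prod_c\Delta_{K_c}(t_c)$ to $\pm P$ or to its conjugate. I would take Newton polytopes, which are convex lattice polytopes in $H_1(F;\R)$. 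The linear map $\phi_*$ must preserve $N(P)=N(\SW_{(X,F)})+\sum_c N(\Delta_{K_c})$, up to $-1$, which accounts for conjugation and the $\pm I$.

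The key step is to choose the curves $c$ and the degrees of the $K_c$ so that this Minkowski sum has a symmetry group that is just $\{\pm I\}$. A natural choice is to use $2g+1$ curves: $a_i$, $b_i$, and a further curve such as $a_1+\dots+b_g$. I would give the segments $N(\Delta_{K_c})=[-d_c,d_c]\cdot[c]$ distinct lengths $d_c$, taken much larger than the size of $N(\SW_{(X,F)})$. The resulting polytope is then approximately a zonotope whose generating segments have distinct lengths and are in general position. A linear automorphism preserving it must permute the edge directions, matching lengths, so it fixes each generator up to sign. Because the extra generator is present, the signs are forced to agree globally, so the map is $\pm I$.

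The main obstacle is the perturbation by $N(\SW_{(X,F)})$, together with care about which homology the variables actually live in. The rim classes might be identified or become torsion in the relative $H_2$, and the gluing must genuinely record the variables $t_c$ faithfully. To handle the perturbation, I would take $d_c$ large and compare the long edges of the polytope, and argue that zonotope edge-direction data survives adding a fixed small polytope. Finally, $\rho(\mathcal E^\infty(X,F'))=1$.
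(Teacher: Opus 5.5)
Your strategy is the paper's: iterated rim surgery with knots whose Alexander polynomials have distinct spans. This turns the Newton data of the relative invariant into the old data plus a large zonotope $Z$. Distinct edge lengths then fix each generating line, and a sign-coupling argument leaves only $\pm I$.

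Two of your local choices differ from the paper, and both are sound.

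\emph{Sign coupling.} You add the curve $a_1+b_1+\cdots+a_g+b_g$ and use a linear relation. This gives stabilizer $\{\pm I\}$ even in $\mathrm{GL}(2g,\Z)$, at the cost of one more surgery. The symplectic basis alone would not suffice for $g\geq2$, since its intersection graph has $g$ components. The paper instead uses exactly the basis $a_i$, $b_i+b_{i+1}$, $b_g$, whose intersection graph is a path, and couples the signs through the intersection form (Lemma~\ref{lem:rigid-zonotope}). Using exactly $2g$ basis lines is also what Theorem~\ref{thm:b} needs.

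\emph{The perturbation.} Your ``long edge'' argument replaces Lemma~\ref{lem:large-zonotope}, which uses difference bodies, the norm $\|\cdot\|_Z$, finiteness of bounded integer matrices, and a limit. Your version works and gives an explicit threshold. Each edge of $Q+mZ$ equals $\mathrm{face}_\ell(Q)+m\,\mathrm{face}_\ell(Z)$ for a suitable functional $\ell$. So it is either a translate of an edge of $Q$, of lattice length at most $D=D(Q)$, or it is parallel to some $v_c$ with lattice length in $[2md_c,\,2md_c+D]$; edges parallel to every $v_c$ occur. Once $2m\min_c d_c$ and $2m\min_{c\neq c'}|d_c-d_{c'}|$ exceed $D$, any lattice automorphism carrying $Q+mZ$ to a translate of $\pm(Q'+mZ)$ fixes every line $\R v_c$. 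You should write this out, since it is the heart of the argument.

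As written, though, the proposal has three gaps.

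(1) The relative invariant is not an element of $\Z[H_1(F;\Z)]$. Its exponents form one or two affine pieces over a lattice that in general contains $\mathcal R_F\cong H_1(F;\Z)$ as a proper sublattice, and it is defined only up to monomials and sign. So ``$N(\SW_{(X,F)})\subset H_1(F;\R)$'' and ``$\phi_*$ preserves $N(P)$'' are not meaningful as stated. You identify this as the main obstacle but do not resolve it. The paper's resolution is the rim Newton profile: split the support into orbits of the $2\mathcal R_F$--translation action and record each orbit as $f_\lambda\in\Z[2\mathcal R_F]$. A diffeomorphism of pairs permutes the orbits and transports the translation classes $[\mathrm{Newt}(f_\lambda)]$ by $\phi_*$ (Lemma~\ref{lem:rim-Newton-profile}). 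Since $q=\prod_c\Delta_{K_c}(r_c^2)$ has exponents in $2\mathcal R_F$, it acts orbitwise and produces the collection $\{[P_\lambda+2mZ]\}$. Your long-edge argument then applies to this collection with $D=\max_\lambda D(P_\lambda)$.

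(2) You never show that $F'$ is exotic. Projective rigidity of $F'$ does not separate it from $F$, since $F$ may already be projectively rigid. You need an invariant of the profile. The paper uses the lattice-point count $\#(P+2mZ)\geq\prod_c(4md_c+1)$, which for large $m$ exceeds every count for $(X,F)$, so naturality excludes a diffeomorphism of pairs.

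(3) For $F^2=n>0$ the relative invariant is defined only after blowing up $n$ points of $F$. A self-diffeomorphism of $(X,F')$ must first be isotoped to fix these points, via a surface isotopy extended to $X$, which does not change $\phi_*$. It is then lifted to $(X\#n\CPb,\widetilde F')$, and the same applies to a putative diffeomorphism $(X,F)\to(X,F')$. Your remark that $F^2\geq0$ ``makes the invariant computable'' does not supply this reduction.
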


The surface $F'$ is obtained from $F$ by $2g$ carefully chosen iterated rim surgeries. If $F$ is ordinary (that is, noncharacteristic), then simple connectivity of the complement implies that $F'$ is topologically flexible; see Remark~\ref{rk:top-flexible}. When $g=1$, our result
comes within one mapping class of smooth rigidity: the only possible
nontrivial smoothly extendable mapping class is the hyperelliptic
involution.  Variants of this construction for surfaces with finite
cyclic complement group, using twist rim surgery, are given in
Remarks~\ref{rmk:cyclic-rim} and~\ref{rmk:plane-curves}. 

\begin{thmb}[Successive symmetry breaking]
\label{thm:b}
For every $g\geq1$, there exist a closed simply connected oriented
smooth $4$--manifold $X_g$ and smoothly embedded closed oriented
genus-$g$ surfaces
\[
 F_0,F_1,\ldots,F_{2g}\subset X_g
\]
such that $F_0$ is smoothly flexible, all the $F_i$ are mutually
topologically isotopic and topologically flexible, and the pairs
$(X_g,F_i)$ are pairwise nondiffeomorphic. After choosing identifications $F_i\cong\Sigma_g$, there is an integral
basis $\{v_1,\ldots,v_{2g}\}$ of $H_1(\Sigma_g;\Z)$ such that, if $P_i$
denotes the subgroup of $\rm{PSp}(2g,\Z)$ preserving each of the
primitive lines $\{\Z v_1,\ldots,\Z v_i\}$,
with $P_0=\rm{PSp}(2g,\Z)$, we have, for every $i=0,\ldots,2g$,
\[
 P_0\supsetneq P_1\supsetneq\cdots\supsetneq P_{2g}=\{1\},
 \qquad
 \rho\bigl(\mathcal E^\infty(X_g,F_i)\bigr)\subseteq P_i.
\]
\end{thmb}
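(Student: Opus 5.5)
We construct the $F_i$ by iterated rim surgery on a single flexible surface and detect the lost symmetries with the Newton polytope of the relative Seiberg--Witten invariant.

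\textbf{Starting pair.} Take $X_g$ to contain a smoothly flexible ordinary genus-$g$ surface $F_0$ with $F_0^2\ge 0$, simply connected complement, and nonzero relative Seiberg--Witten invariant. A candidate is a symplectic surface $F_0$ in an elliptic surface such as $E(n)$, or a fiber sum containing a copy of $\Sigma_g\times T^2$ with $F_0=\Sigma_g\times\{pt\}$. Smooth flexibility would come from the product structure: every $\phi\in\M(\Sigma_g)$ extends as $\phi\times\mathrm{id}$ over $\Sigma_g\times D^2$. We then need to extend across the complement. One route is to choose $X_g$ as a branched or fiber-sum construction over a model in which the full mapping class group acts, e.g.\ by arranging that $F_0$ is a fiber of a genus-$g$ fibration whose monodromy commutes appropriately.

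\textbf{The surfaces $F_i$.} Fix a symplectic basis and choose an integral basis $v_1,\dots,v_{2g}$ of $H_1(\Sigma_g)$. Define $F_i$ by rim surgery on $F_0$ along curves representing $v_1,\dots,v_i$, using knots $K_1,\dots,K_i$ with distinct, suitably independent Alexander polynomials. Each rim surgery is done with a knot of nontrivial Alexander polynomial in a pushed-off rim torus. Every rim surgery preserves the topological isotopy class, since rim surgery with simply connected complement does not change the topological type (Kim--Ruberman). Because $F_0$ is ordinary with simply connected complement, topological flexibility follows as in Remark~\ref{rk:top-flexible}.

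\textbf{Obstruction.} By the gluing formula, the relative invariant of $(X_g,F_i)$ is $SW(X_g,F_0)\cdot\prod_{j\le i}\Delta_{K_j}(t_{v_j})$, where $t_{v_j}$ is the variable for the rim torus class $T_{v_j}$. Rim tori give an $H_1(F)$-indexed family, $v\mapsto T_v\in H_2(X\setminus\nu F)$, and this is natural under pair diffeomorphisms $\Phi$: $\Phi_*T_v=\pm T_{\phi_*v}$, with the sign depending on the orientation action on the normal circle. The Newton polytope of the invariant is a Minkowski sum of segments in the directions $v_1,\dots,v_i$, up to a contribution from $SW(X_g,F_0)$ that we arrange to be independent of the rim variables. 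Since $SW$ is preserved up to sign and conjugation by $\Phi$, $\phi_*$ must preserve this zonotope up to $\pm$.

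The segments have distinct lengths given by the spans of the $\Delta_{K_j}$; take $\deg\Delta_{K_j}$ strictly increasing. Then the polytope determines each line $\Z v_j$ individually, since the edge directions of a zonotope with generators $\ell_j v_j$ are exactly the $v_j$, and distinct lengths prevent permutation. Hence $\pm\phi_*$ preserves each $\Z v_j$ for $j\le i$, giving $\rho(\mathcal E^\infty)\subseteq P_i$. For $i=2g$, $\phi_* v_j=\epsilon_j v_j$ with a common overall sign up to $\pm$. Symplecticity and a choice of basis with every $v_j$ pairing nontrivially with some $v_k$ along a connected graph forces all $\epsilon_j$ equal, so $P_{2g}=\{1\}$. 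The basis should be chosen, for instance $v_{2k}=a_k+b_k$-type mixtures, so that each inclusion $P_{i-1}\supsetneq P_i$ is strict; exhibiting explicit elements such as transvections verifies this.

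\textbf{Nondiffeomorphism of pairs.} The pairs $(X_g,F_i)$ are pairwise nondiffeomorphic because the polytopes have different dimensions $i$.

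\textbf{Main obstacle.} Two steps are delicate: naturality of the rim torus classes in $H_2$ of the complement, which needs $\pi_1=1$ and a careful treatment of the meridian and relative classes, and making $F_0$ genuinely smoothly flexible. I would first try $X_g=\Sigma_g\times S^2\#$ suitable fiber sums with $E(n)$ along a torus disjoint from $F_0$. There the $\phi\times\mathrm{id}$ extensions are evident, and one checks that the relative invariant is nonzero via symplectic fiber sum formulas.
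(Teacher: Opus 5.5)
Your obstruction argument is essentially the paper's: rim surgery along an integral basis using knots with strictly increasing Alexander spans, edge lengths of the resulting parallelotope pinning down each line $\Z v_j$, a connected intersection graph forcing a common sign, transvections for strictness, and affine dimension to show the pairs are nondiffeomorphic.

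\textbf{Main gap: the starting pair.} The theorem asserts that $F_0$ is smoothly flexible, so you must exhibit, for every $g$, such an $F_0$ in a simply connected $X_g$ with $\pi_1(X_g\setminus\nu F_0)=1$ and $f_{X_g,F_0}\neq0$. You do not, and your concrete attempt fails.

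In $\Sigma_g\times S^2$ the complement of $\Sigma_g\times\{pt\}$ is $\Sigma_g\times D^2$. Its fundamental group is $\pi_1(\Sigma_g)$, and every rim torus $\gamma\times\mu$ bounds $\gamma\times D^2$. So the rim lattice is zero, and the rim-surgery formula detects nothing.

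Summing with $E(n)$ along a torus $T$ disjoint from $F_0$ does not repair this. The evident extension $\phi\times\mathrm{id}$ survives only when it preserves $T$. For $T=\gamma\times C$, only $\phi$ stabilizing $\gamma$ survive, and the complement group merely drops to $\pi_1(\Sigma_g)/\langle\langle\gamma\rangle\rangle$. Moreover, for $g\geq2$ every torus in $\Sigma_g\times D^2$ is nullhomologous, since a torus admits no nonzero-degree map to $\Sigma_g$. So there is no symplectic torus to which fiber-sum formulas could apply.

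The missing idea is to take $F_0$ to be a regular fiber of a Lefschetz fibration whose monodromy is \emph{surjective} onto $\mathrm{Mod}(\Sigma_g)$; a condition that the monodromy ``commutes appropriately'' is not what is needed. Surjectivity gives everything at once:
\begin{enumerate}
\item Parallel transport of the fiber around a loop in the base, followed by isotopy extension, realizes that loop's monodromy by an ambient diffeomorphism, so $F_0$ is smoothly flexible.
\item Surjectivity forces the vanishing cycles to normally generate $\pi_1(F_0)$, so the complement is simply connected.
\item The fiber is symplectic, so $f_{X_g,F_0}\neq0$.
\end{enumerate}
Such fibrations exist in every genus. For $g=1$, take $E(2)$. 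For $g\geq2$, take a twisted fiber sum of two hyperelliptic fibrations $h_g^2=1$ whose twist carries $c_1$ to the extra Humphries curve.

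\textbf{Second gap: the initial contribution.} You propose to ``arrange'' that the contribution of $f_{X_g,F_0}$ is independent of the rim variables, but give no mechanism. Both your symmetry bound and your dimension count depend on this. If an initial polytope $P$ were positive dimensional, preserving the class of $P+2Z_i$ would not force $\phi_*(Z_i)=Z_i$, and $\dim(P+2Z_i)$ need not equal $i$.

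Given flexibility this is automatic, but it must be argued. Let $R\cong H_1(F_0;\Z)$ be the rim lattice. The finite set of pairwise differences of exponents within each $2R$-orbit of the support is invariant under all of $\mathrm{Sp}(2g,\Z)$. Every nonzero class has an infinite orbit under a transvection, so this set is $\{0\}$, and every initial polytope is a point.

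The invariant lives on several affine orbits and is defined only up to monomials and signs. So the object to track is the collection of translation classes of these polytopes. Central symmetry of $2Z_i$ then upgrades ``$\phi_*(2Z_i)$ is a translate of $2Z_i$'' to $\phi_*(Z_i)=Z_i$.

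\textbf{Smaller points.}
\begin{itemize}
\item A basis with $v_{2k}=a_k+b_k$ and $v_{2k-1}$ in the $k$th handle has a disconnected intersection graph for $g\geq2$, so it does not yield $P_{2g}=\{1\}$. You need links across handles, e.g.\ $v_{2k-1}=a_k$, $v_{2k}=b_k+b_{k+1}$ for $k<g$, and $v_{2g}=b_g$.
\item Strictness of $P_{i-1}\supsetneq P_i$ holds for any integral basis, by unimodularity of the intersection form; no special basis is needed for it.
\item Topological isotopy comes from Boyer's theorem. Kim--Ruberman treat twist rim surgery with cyclic complement groups, which is not your setting.
\item Topological flexibility of every $F_i$ follows from smooth flexibility of $F_0$ via topological isotopy and conjugation. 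You do not need $F_0$ to be ordinary.
\end{itemize}
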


Thus each rim surgery forces one additional primitive homology line
to be preserved. The strict filtration is the sequence of upper
bounds $P_i$; we do not assert that the actual images
$\rho(\mathcal E^\infty(X_g,F_i))$ are nested.  This is an exotic
analogue of the loss of symmetry already visible in the surface-knot
examples of Iwase and Hirose \cite{Iwase1988,Hirose1993}.

A complementary application of hyperbolic geometry gives a surface that is rigid in both categories:

\begin{thmc}[A completely rigid example]
\label{thm:c}
There is a closed oriented hyperbolic $4$--manifold $X$ containing a
closed oriented embedded totally geodesic surface $F$ of genus $40$
such that
\[
 \mathcal E^\infty(X,F)
 =
 \mathcal E^0(X,F)
 =
 \{1\}.
\]
\end{thmc}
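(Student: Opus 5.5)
The plan is to reduce both extendable mapping class groups to a finite group of isometries via Mostow rigidity, and then to choose the example so that this finite group acts trivially on $F$. Since $\mathcal E^\infty(X,F)\subseteq\mathcal E^0(X,F)$, it suffices to show $\mathcal E^0(X,F)=\{1\}$.

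\emph{Step 1 (replacing homeomorphisms by isometries).} Let $\Phi\in\mathrm{Homeo}^+(X)$ with $\Phi(F)=F$, and put $\phi=\Phi|_F$. By Mostow rigidity, $\Phi$ is homotopic to a unique isometry $\psi\in\mathrm{Isom}^+(X)$, and $\mathrm{Isom}(X)$ is finite. The restricted homotopy shows that the maps $\psi|_F$ and $\phi$ from $F$ to $X$ are homotopic. Since $F$ is totally geodesic, it is $\pi_1$--injective. A closed totally geodesic immersed surface is the unique geodesic representative of its free homotopy class of $\pi_1$--injective maps: lift to the cover $\mathbb H^4/\pi_1(F)$ and use that the totally geodesic $\mathbb H^2$ is the unique $\pi_1(F)$--invariant minimal totally geodesic plane. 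Because $\psi(F)$ is totally geodesic and homotopic to $\Phi(F)=F$, this gives $\psi(F)=F$.

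\emph{Step 2 (comparing mapping classes).} Next I compare $\phi$ and $\psi|_F$ in $\M(F)$. A homotopy between them in $X$ shows that the induced maps $\pi_1(F)\to\pi_1(X)$ agree up to conjugation by some $\gamma\in\pi_1(X)$. Since both maps preserve $\pi_1(F)$, the element $\gamma$ lies in the normalizer $N_{\pi_1 X}(\pi_1F)$. For an embedded totally geodesic surface, the normalizer equals $\pi_1(F)$. Indeed, $\gamma$ preserves the plane $\widetilde F\subset\mathbb H^4$, and embeddedness forces $\gamma$ to act on $\widetilde F$ as a deck transformation of $F$, possibly composed with a normal rotation. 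A normal rotation would be a torsion element, which a torsion-free lattice cannot contain, so $\gamma\in\pi_1(F)$. Hence $\phi$ and $\psi|_F$ induce the same outer automorphism of $\pi_1(F)$, and by Dehn--Nielsen--Baer they are isotopic. Consequently
\[
 \mathcal E^0(X,F)\subseteq\bigl\{[\psi|_F] : \psi\in\mathrm{Isom}^+(X),\ \psi(F)=F\bigr\}.
\]

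\emph{Step 3 (choosing the example).} It remains to exhibit a closed oriented hyperbolic $4$--manifold $X$ with an embedded totally geodesic surface $F$ of genus $40$ (Euler characteristic $-78$) such that every orientation-preserving isometry stabilizing $F$ restricts to the identity on $F$. Such a restriction is an isometry of the hyperbolic surface $F$, and an isometry of $F$ isotopic to the identity is the identity. So it is enough to show that the image of $\mathrm{Stab}_{\mathrm{Isom}^+(X)}(F)\to\mathrm{Isom}(F)$ is trivial.

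I would first try an explicit arithmetic example: a torsion-free finite-index subgroup of a reflection group such as the right-angled $120$--cell group, or one of the Ratcliffe--Tschantz manifolds. For such an $X$, the symmetry group can be computed as the normalizer quotient of the lattice. The candidate $F$ is a component of the fixed set of a reflection of the orbifold, lifted to $X$, and its area $2\pi\cdot 78$ is checked against the number of polygons it contains. An alternative is a Belolipetsky--Lubotzky style construction with $\mathrm{Isom}(X)=\{1\}$, in which a totally geodesic surface is then located; but controlling the genus precisely is harder that way.

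The main obstacle is Step 3. Steps 1 and 2 are standard negative-curvature rigidity. Producing a concrete $X$ and verifying, by a finite group computation, that no nontrivial isometry preserves $F$ while acting nontrivially on it is where the real work lies, and it is also what pins down the specific genus $40$.
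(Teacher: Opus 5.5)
Your Steps 1 and 2 are correct and match the paper's Proposition~\ref{prop:mostow-reduction}. Mostow rigidity replaces $\Phi$ by an isometry, and the normalizer computation $N_{\pi_1X}(\pi_1F)=\pi_1F$ shows that this isometry preserves $F$ and induces the same element of $\mathrm{Out}^+(\pi_1F)$ as $\Phi|_F$. Dehn--Nielsen--Baer then concludes.

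The genuine gap is Step 3, which is a search plan rather than an argument. You never produce a pair $(X,F)$, you never show that any of your candidate families can work, and nothing in your outline explains where genus $40$ comes from. Your most concrete suggestions in fact point the wrong way:
\begin{itemize}
\item The best-known Ratcliffe--Tschantz manifolds are glued from the ideal $24$--cell, so they are cusped, not closed.
\item Suppose $X$ regularly covers the right-angled $120$--cell reflection orbifold and $F$ is the image of a $2$--plane $P$ of the tessellation. Take the two facet hyperplanes orthogonal to $P$ through adjacent edges of a pentagonal face. The product of their reflections normalizes $\pi_1X$, so it descends to an orientation-preserving isometry of $X$ preserving $F$. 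On $F$ it acts as a rotation by $\pi$ about a vertex. Since $\pi_1X$ is torsion-free, this is a nontrivial isometry of $F$, hence a nontrivial element of $\M(F)$, so $\mathcal E^\infty(X,F)\neq\{1\}$ there.
\end{itemize}
Highly symmetric ambient manifolds tend to force exactly the symmetries you are trying to kill. Also, the fixed set of a reflection of a $4$--manifold is $3$--dimensional, so it cannot be your surface.

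The missing idea is to make $F$ \emph{intrinsically} asymmetric rather than to control $\mathrm{Isom}(X)$. You already observe that an isometry stabilizing $F$ restricts to an isometry of $F$. Since $\mathcal E^0(X,F)$ consists of orientation-preserving classes, it suffices to have $\mathrm{Isom}^+(F)=\{1\}$. Then \emph{any} closed hyperbolic $4$--manifold containing $F$ as an embedded totally geodesic surface works, with no computation of its symmetry group.

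The paper does this as follows.
\begin{itemize}
\item It takes Maclachlan's torsion-free \emph{maximal} arithmetic Fuchsian group $\Gamma_0$ over a totally real cubic field. Then $F=\mathbb H^2/\Gamma_0$ has genus $40$, which is where the genus comes from.
\item The group $\mathrm{Isom}^+(F)\cong N_{\mathrm{PSL}(2,\R)}(\Gamma_0)/\Gamma_0$ is finite, so the normalizer is a finite-index arithmetic overgroup. Maximality forces it to equal $\Gamma_0$, so $\mathrm{Isom}^+(F)=\{1\}$.
\item The ambient manifold comes from existence theorems, not an explicit model. The lattice is of simplest type in $\mathrm{SO}^+(q,k)$ (Kolpakov--Reid--Slavich). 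Applying the Martelli--Riolo--Slavich embedding lemma twice gives totally geodesic embeddings $F\hookrightarrow Y^3\hookrightarrow X^4$. The manifold $X$ is compact because $k\neq\Q$ (Lemma~\ref{lem:two-step-arithmetic-embedding}).
\end{itemize}

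Your Belolipetsky--Lubotzky alternative attacks the problem from the ambient side. It would still leave both the existence of an embedded totally geodesic surface and its genus uncontrolled.
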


Such an example is a natural candidate for a surface admitting
no exotic copies; see also Section~\ref{sec:questions}.

We prove Theorems~\ref{thm:a} and~\ref{thm:b} in
Sections~\ref{sec:rigid} and~\ref{sec:filtration}.  The proofs use the
relative Seiberg--Witten rim-surgery formula of Fintushel and Stern
\cite{FintushelStern1997,FintushelSternAddendum}.  Inspired by
McMullen--Taubes and Vidussi
\cite{McMullenTaubes1999,Vidussi2001}, we develop a convex-geometric
package for the relative invariant, adapted to rim surgery, and combine
it with integral symplectic linear algebra.  From the support of the
invariant we extract a finite collection of translation classes of
Newton polytopes, which we call its \emph{rim Newton profile}; this is
insensitive to the natural monomial and sign ambiguities.  Under rim
surgery, each polytope changes by Minkowski addition of a weighted
segment; under iteration, the added segments form zonotopes whose
geometry distinguishes the resulting pairs, while their symplectic
stabilizers constrain the smooth ambient symmetries.  In particular, a positive-dimensional rim Newton
polytope obstructs smooth flexibility; see
Proposition~\ref{prop:flexible-profile}.  Reciprocity of the Alexander
polynomials appearing in the rim-surgery formula,
$\Delta_K(t^{-1})=\Delta_K(t)$ in the symmetrized normalization, makes
the added zonotopes centrally symmetric.  Thus the method cannot
distinguish a rim class from its negative, leaving the residual $\pm I$
ambiguity.  

We prove Theorem~\ref{thm:c} in Section~\ref{sec:hyperbolic}. The proof combines Mostow rigidity and the
Dehn--Nielsen--Baer theorem with arithmetic embedding results for
hyperbolic manifolds. Section~\ref{sec:questions} concludes with some questions.

In Appendix~\ref{appendix:kleinwasright}, we show that the relative
diffeomorphism and homeomorphism groups of a pair $(X,F)$ determine the
pair up to diffeomorphism and homeomorphism, respectively.
Proposition~\ref{prop:mcg-forgets} illustrates how passing to extendable
mapping class groups loses information.


\smallskip
\section{Projectively rigid surfaces}
\label{sec:rigid}

The proof of Theorem~\ref{thm:a} combines the naturality of extendable
mapping class groups, the Fintushel--Stern rim-surgery formula, and a
Newton-polytope argument for the relative Seiberg--Witten invariants. We begin with some basic observations about
extendability and projective homology.

\subsection{Extendable groups and projective homology}

\begin{lem}
\label{lem:extendable-conjugacy}
Let
$h\colon (X,F)\to (X',F')$
be an orientation-preserving homeomorphism of oriented pairs.
Conjugation by $h|_F$ gives an isomorphism
$\M(F)\to\M(F')$ carrying
$\mathcal E^0(X,F)$ onto $\mathcal E^0(X',F')$.
The analogous statement holds in the smooth category when $h$ is a
diffeomorphism.
\end{lem}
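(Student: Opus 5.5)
Write $f=h|_F\colon F\to F'$. Since $h$ is an orientation-preserving homeomorphism of oriented pairs, $f$ is an orientation-preserving homeomorphism of oriented surfaces. In the smooth case it is a diffeomorphism. The plan is to define $c_f\colon \M(F)\to\M(F')$ by $[\phi]\mapsto[f\phi f^{-1}]$ and check three things: it is a well-defined isomorphism, it carries extendable classes to extendable classes, and the same argument applied to $h^{-1}$ gives the reverse inclusion.

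\textbf{Well-definedness.} If $\phi_t$ is an isotopy, then $f\phi_t f^{-1}$ is an isotopy. Hence $c_f$ is well defined on mapping classes, and it is clearly a homomorphism. Its inverse is $c_{f^{-1}}$, so it is an isomorphism. In the topological setting $f$ need not be smooth. There I would use that, for surfaces, $\M(F)$ is $\pi_0\mathrm{Homeo}^+(F)$, equivalently $\pi_0\mathrm{Diff}^+(F)$. Conjugation by a homeomorphism is then well defined on $\pi_0\mathrm{Homeo}^+$. If needed, one can instead replace $f$ by a diffeomorphism isotopic to it, which does not change the induced map on mapping classes.

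\textbf{Extendable classes go to extendable classes.} Take $[\phi]\in\mathcal E^0(X,F)$. By definition, some representative $\phi$ extends to some $\Phi\in\mathrm{Homeo}^+(X)$ with $\Phi(F)=F$ and $\Phi|_F=\phi$. This is the notion of extendability up to isotopy that isotopy extension makes well defined on classes. Set $\Phi'=h\Phi h^{-1}\in\mathrm{Homeo}^+(X')$; it preserves orientation because $h$ does. Then $\Phi'(F')=F'$ and $\Phi'|_{F'}=f\phi f^{-1}$. So $c_f[\phi]\in\mathcal E^0(X',F')$.

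\textbf{Reverse inclusion and smooth case.} Applying the same argument to $h^{-1}$ shows $c_{f^{-1}}$ sends $\mathcal E^0(X',F')$ into $\mathcal E^0(X,F)$, so $c_f$ maps $\mathcal E^0(X,F)$ onto $\mathcal E^0(X',F')$. In the smooth category everything is verbatim with $\mathrm{Diff}^+$, since conjugates of diffeomorphisms by diffeomorphisms are diffeomorphisms.

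The only delicate point is the topological well-definedness above, and it is mild. Everything else is formal.
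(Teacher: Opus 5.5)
Your proposal is correct and follows the paper's proof. You conjugate an extension $\Phi$ of $\phi$ to $h\Phi h^{-1}$, which preserves $F'$ and restricts to $h|_F\,\phi\,(h|_F)^{-1}$, then apply the same argument to $h^{-1}$ for the reverse inclusion; your extra remarks on well-definedness when $h|_F$ is only a homeomorphism ($\pi_0\mathrm{Homeo}^+(F)=\pi_0\mathrm{Diff}^+(F)$, plus isotopy extension) just spell out what the paper leaves implicit.
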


\begin{proof}
If $[\phi]\in\mathcal E^0(X,F)$ is induced by
$\Phi\in\rm{Homeo}^+(X)$, then $h\Phi h^{-1}$ preserves $F'$ and
induces
\[
 h|_F\phi(h|_F)^{-1}
\]
on $F'$.  Applying the same argument to $h^{-1}$ gives the reverse
inclusion.
\end{proof}

Thus, under the identification induced by a topological isotopy,
topologically isotopic surfaces have the same topological extendable
mapping class group.  In particular, as we have the natural inclusion
$\mathcal E^\infty(X,F)\subseteq\mathcal E^0(X,F)$, a smoothly
flexible surface is topologically flexible, as is every surface
topologically isotopic to it.

For a closed oriented surface $F$ of positive genus, the action of
$\M(F)$ on $H_1(F;\Z)$ preserves the algebraic intersection form.  We
write
\[
 \rho\colon\M(F)\rightarrow
 \rm{PSp}\bigl(H_1(F;\Z)\bigr)
 :=
 \rm{Sp}\bigl(H_1(F;\Z)\bigr)/\{\pm I\},
 \qquad
 \rho(\phi)=[\phi_*],
\]
for the projectivized homological representation.  Thus
\[
 \ker\rho
 =
 \left\{
 \phi\in\M(F)\mid
 \phi_*=\pm I\text{ on }H_1(F;\Z)
 \right\}.
\]
Note that the Torelli group
\[
 \mathcal I(F)
 =
 \left\{
 \phi\in\M(F)\mid\phi_*=I
 \right\}
\]
is an index-$2$ subgroup of $\ker\rho$. When $F=T^2$,
\[
 \M(T^2)\cong\rm{SL}(2,\Z),
 \qquad
 \mathcal I(T^2)=\{1\},
 \qquad
 \ker\rho=\{\pm I\},
\]
where $-I$ is the unique nontrivial central mapping class, represented by the hyperelliptic involution.

\subsection{Rim surgery and relative Seiberg--Witten invariants}

We now recall the Fintushel--Stern rim-surgery construction and its effect on the relative invariant. Assume first that $F^2=0$, and set
\[
 M_F=X\setminus\rm{int}\nu F.
\]
Fix a trivialization $\nu F\cong F\times D^2$, so that
$\partial M_F\cong F\times S^1$.  For an oriented simple closed curve
$\gamma\subset F$, let
\[
 R_\gamma=\gamma\times\mu_F\subset\partial M_F
\]
be the corresponding \emph{rim torus}, where $\mu_F$ is a positively
oriented normal circle to $F$.  Under the assumption
$\pi_1(M_F)=1$, the rim tori generate a distinguished subgroup  $\mathcal R_F \subset H_2(M_F;\Z)$, which can be identified as
\[
 \mathcal R_F\cong H_1(F;\Z).
\]
We use the
same notation $\mathcal R_F$ for this identified lattice, labeling its
elements by the corresponding classes $[R_\gamma]$. 
A diffeomorphism of pairs restricting to $\phi$ on $F$ sends
$[R_\gamma]$ to $[R_{\phi(\gamma)}]$, and hence acts on
$\mathcal R_F$ as $\phi_*$.

We recall the relative Seiberg--Witten invariant of Fintushel and
Stern \cite{FintushelSternAddendum}; for general background and
conventions, see also \cite{FintushelSternLectures}.  By the
K\"unneth formula,
\[
 H^2(F\times S^1;\Z)
 \cong
 H^2(F;\Z)\oplus H^1(F;\Z)
 \cong
 \Z\oplus H^1(F;\Z),
\]
with no torsion terms.  Since $F\times S^1$ is spin and $H^2(F\times S^1;\Z)$ has no
$2$--torsion, the first Chern class identifies ${\rm Spin}^c$
structures on $F\times S^1$ with the even cohomology classes; i.e.\
we identify $\mathfrak s$ with
$\frac12c_1(\mathfrak s)\in H^2(F\times S^1;\Z)$.

Following \cite{FintushelStern1997, FintushelSternAddendum}, let $\mathfrak s_k$ denote the
${\rm Spin}^c$ structure corresponding under this identification to
\[
 (k,0)\in H^2(F;\Z)\oplus H^1(F;\Z).
\]
Thus
\[
 c_1(\mathfrak s_k)=2(k,0).
\]
If $u\in H^2(F;\Z)$ is the positive generator satisfying
$\langle u,[F]\rangle=1$, then $(k,0)$ means $(ku,0)$, and hence
\[
 \bigl\langle c_1(\mathfrak s_{g-1}),[F]\bigr\rangle
 =
 2g-2.
\]

Let $\mathcal T_F$ be the set of ${\rm Spin}^c$ structures on
$M_F$ whose boundary value is $\mathfrak s_{g-1}$ or
$-\mathfrak s_{g-1}$, where $-\mathfrak s_{g-1}$ denotes the
conjugate ${\rm Spin}^c$ structure.  Thus $\mathcal T_F$ is the union
of one or two affine pieces, according as these boundary structures
coincide or not.  Set
\[
 \Lambda_F
 =
 \ker\left(
 H^2(M_F;\Z)\rightarrow H^2(\partial M_F;\Z)
 \right).
\]
Each nonempty affine piece of $\mathcal T_F$ is a torsor over
$\Lambda_F$: twisting an extension by an element of $\Lambda_F$
preserves its boundary value, and the difference of any two
extensions with the same boundary value lies in $\Lambda_F$.  There
is no preferred origin.  For $g=1$ the two boundary structures
coincide, while for $g>1$ they are distinct.

Since
$\pi_1(M_F)=1$, the group $H^2(M_F;\Z)$ has no $2$--torsion, so the
first Chern class is injective on $\mathcal T_F$.  We therefore use
\[
 \mathcal A_F
 =
 \{c_1(\tau)\mid \tau\in\mathcal T_F\}
\]
as the affine exponent set. Each affine piece of $\mathcal A_F$ is affine over $2\Lambda_F$.
Through the cohomological identification used in the
Fintushel--Stern rim-surgery formula, the homological rim-torus lattice
$\mathcal R_F\cong H_1(F;\Z)$ determines a distinguished lattice of
translation directions in $\Lambda_F$.  We use the same notation
$\mathcal R_F$ for this identified copy.

After choosing generators of the relevant rank-one monopole Floer
homology groups, the relative Seiberg--Witten invariant becomes a
finitely supported integer-valued function on $\mathcal T_F$.  Using the first
Chern class, we encode it as the affine Laurent polynomial
\[
 f_{X,F}
 =
 \sum_{\alpha\in\mathcal A_F}
 a_\alpha e^\alpha , 
\]
where $a_\alpha\in\Z$.
Choosing origins in the affine components identifies this with one or
two ordinary Laurent polynomials.  Changing an origin multiplies the
corresponding polynomial by a monomial, while changing the Floer generators changes the signs of the corresponding
polynomials.  We write $\doteq$ for equality
up to these ambiguities.

Let $F_{K,\gamma}\subset X$ denote the result of rim surgery on $F$
along $\gamma$ using a knot $K\subset S^3$.

\begin{prop}[Fintushel--Stern
\cite{FintushelStern1997,FintushelSternAddendum}]
\label{prop:FS-rim-formula}
Suppose that $F^2=0$ and $\pi_1(M_F)=1$.  Then
\[
 \pi_1(X\setminus\nu F_{K,\gamma})=1,
\]
and there are natural identifications of the affine exponent sets and
rim-torus subgroups for $(X,F)$ and $(X,F_{K,\gamma})$.  Under these
identifications,
\[
 f_{X,F_{K,\gamma}}
 \doteq
 f_{X,F}\,\Delta_K(r_\gamma^2),
 \tag{2.1}
\]
where $r_\gamma=e^{[R_\gamma]}$, so that
$r_\gamma^2=e^{2[R_\gamma]}$.  Moreover, a diffeomorphism of pairs
acts naturally on the affine exponent set, preserves the relative
invariant up to the above ambiguities, and acts on the rim-torus
subgroup through its action on $H_1(F;\Z)$.
\end{prop}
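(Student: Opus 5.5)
This proposition is the Fintushel--Stern rim-surgery formula. Rather than rederive the gauge theory, I would assemble it from the cited ingredients, organized in three steps: the fundamental group statement, the identifications, and the gluing formula with its naturality.

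\emph{Fundamental group.} Rim surgery replaces $\nu\gamma'\times\mu_F$, a neighborhood of the rim torus $R_\gamma$ in $\partial M_F$, by $S^1\times(S^3\setminus\nu K)$ glued fiberwise. Concretely, I would push $R_\gamma$ slightly into $M_F$ and perform the knot surgery $T^2\times D^2\leadsto S^1\times(S^3\setminus\nu K)$ there; this gives $M_{F_{K,\gamma}}$. Van Kampen then shows that $\pi_1$ of the new complement is a quotient of $\pi_1(M_F\setminus\nu R_\gamma)$ amalgamated with $\pi_1(S^1\times(S^3\setminus\nu K))$. The knot group is normally generated by a meridian of $K$, and the gluing identifies that meridian with a curve that bounds a disk: in Fintushel--Stern this is the normal circle $\mu_F$ direction pushed off, which is nullhomotopic because $\pi_1(M_F)=1$ and the pushed-in torus has a dual. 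The $S^1$ factor is identified with $\gamma$, which is also nullhomotopic in $M_F$. So the amalgam collapses to the quotient of $\pi_1(M_F\setminus\nu R_\gamma)$. Since $R_\gamma$ is a rim torus, it meets a pushed-off copy of a meridian disk, so the meridian of $R_\gamma$ is killed, and the whole group is trivial.

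\emph{Identifications.} Knot surgery along a nullhomologous-in-$M_F$ torus with the same boundary $F\times S^1$ preserves homology, via the standard Mayer--Vietoris comparison using $H_*(S^3\setminus\nu K)\cong H_*(S^1\times D^2)$. This gives canonical isomorphisms of $H^2(M_F)$ and $H^2(M_{F_{K,\gamma}})$ compatible with restriction to the common boundary. These isomorphisms identify $\Lambda_F$, the affine sets $\mathcal T_F$ and $\mathcal A_F$, and the rim lattice $\mathcal R_F$.

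\emph{Formula and naturality.} I would apply the relative Seiberg--Witten gluing theorem for knot surgery along a torus with trivial normal bundle, in the relative version of \cite{FintushelSternAddendum}. This multiplies the invariant by the symmetrized Alexander polynomial evaluated at $e^{2[R_\gamma]}$, the factor $2$ coming from the convention $c_1=2\cdot\frac12 c_1$. Naturality holds because a diffeomorphism of pairs induces an isomorphism of the relevant Floer groups (up to sign) and of the ${\rm Spin}^c$ sets. Relative invariants are diffeomorphism invariants, and the action on rim tori was computed above as $\phi_*$.

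The main obstacle is the gluing step, namely verifying that the rank-one Floer generators and origins line up so that the only discrepancy is a monomial and a sign. I would defer entirely to \cite{FintushelStern1997,FintushelSternAddendum} for this rather than reprove it.
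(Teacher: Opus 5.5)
The paper gives no proof of this proposition beyond citing Fintushel--Stern, so deferring the gluing formula, the matching of Floer generators and origins, and naturality is consistent with it. The problems are in the topology you supply yourself.

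\textbf{The $\pi_1$ argument.} Its last step fails as stated. The meridian $\mu_R$ of the pushed-in rim torus is not killed by any dual in $M_F$. A meridian disk of $F$ contains a point of $F$, so no pushoff of it lies in $M_F$. Moreover, $[R_\gamma]$ lies in the image of $H_2(\partial M_F)\to H_2(M_F)$, hence in the radical of the intersection form of $M_F$. The exact sequence of the pair $(M_F,M_F\setminus\nu R_\gamma)$ then gives $H_1(M_F\setminus\nu R_\gamma)\cong\Z$, generated by $\mu_R$.

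Similarly, $\pi_1(M_F)=1$ alone does not make the curves glued to the $S^1$ factor and to the meridian $m$ of $K$ null in $M_F\setminus\nu R_\gamma$. That holds for the product pushoffs because they slide across the collar into the inner copy of $M_F$.

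The correct mechanism runs as follows. Those gluing relations kill the $S^1$ generator and $m$, hence all of $\pi_1(S^1\times(S^3\setminus\nu K))$, including the longitude of $K$. The longitude is glued to $\mu_R$, and $\mu_R$ normally generates $\pi_1(M_F\setminus\nu R_\gamma)$, so the amalgam is trivial. A cleaner route avoids the rim-torus complement altogether. Write $X\setminus\nu F_{K,\gamma}$ as the union of $X\setminus(\nu F\cup\nu\gamma)\cong M_F$ and $S^1\times(B^3\setminus\nu K_{\mathrm{arc}})$ along $S^1\times S^1\times[0,1]$. The fundamental group of this interface is generated by $\gamma$ and the meridian, and it normally generates the second piece, so van Kampen finishes.

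\textbf{The identification step.} It rests on a false claim: the rim torus is not nullhomologous in $M_F$. Since $H_3(M_F,\partial M_F)\cong H^1(M_F)=0$, the map $H_2(\partial M_F)\to H_2(M_F)$ is injective. So $[R_\gamma]\neq0$ whenever $[\gamma]\neq0$ in $H_1(F;\Z)$. This is exactly what gives formula (2.1) content: if $[R_\gamma]$ vanished, then $\Delta_K(r_\gamma^2)=\Delta_K(1)=\pm1$ and rim surgery would be invisible to the relative invariant.

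The hypothesis is not needed anyway. One has $H_*(S^1\times(S^3\setminus\nu K))\cong H_*(T^2\times D^2)$ compatibly with the gluing, which sends the longitude of $K$ to the meridian of $R_\gamma$. Hence the Mayer--Vietoris comparison identifies $\Lambda_F$, $\mathcal A_F$ and $\mathcal R_F$ for the two pairs once the qualifier is dropped.
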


Formula~(2.1) can be iterated.  Let
\[
 F_0=F,\qquad
 F_i=(F_{i-1})_{K_i,\gamma_i},
 \qquad i=1,\ldots,m,
\]
where, under the natural identifications after each surgery,
$\gamma_i$ represents $v_i\in H_1(F;\Z)$.  Then
\[
 f_{X,F_m}
 \doteq
 f_{X,F}
 \prod_{i=1}^m\Delta_{K_i}(r_{v_i}^2).
 \tag{2.2}
\]
In particular, $f_{X,F_m}\neq0$ whenever $f_{X,F}\neq0$.

The topology of the resulting surfaces is also unchanged in the
strong sense needed here.

\begin{lem}
\label{lem:rim-topology}
Suppose that $X$ is simply connected and
$\pi_1(X\setminus\nu F)=1$.  Every surface obtained from $F$ by a
finite sequence of rim surgeries is topologically isotopic to $F$.
Consequently, under the identification induced by this isotopy, $
 \mathcal E^0(X,F_i)=\mathcal E^0(X,F)
$ 
for every $i$.
\end{lem}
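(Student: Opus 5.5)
The plan is to induct on the number of rim surgeries, proving at each stage two things: $\pi_1(X\setminus\nu F_i)=1$, and $F_i$ is topologically isotopic to $F_{i-1}$. The first part is exactly Proposition~\ref{prop:FS-rim-formula}, which keeps the hypotheses alive for the next surgery. Composing the isotopies then gives a topological isotopy $F\simeq F_i$. The statement about $\mathcal E^0$ follows from isotopy extension together with Lemma~\ref{lem:extendable-conjugacy}. A topological isotopy of locally flat surfaces extends to an ambient isotopy $h_t$ of $X$. The endpoint $h_1\colon(X,F)\to(X,F_i)$ is then an orientation-preserving homeomorphism of pairs, and conjugation by $h_1|_F$ carries $\mathcal E^0(X,F)$ onto $\mathcal E^0(X,F_i)$. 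This is the identification referred to in the statement.

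So the content is the single-step claim: if $\pi_1(X\setminus \nu F)=1$ and $X$ is simply connected, then $F_{K,\gamma}$ is topologically isotopic to $F$. First I would recall the construction. Rim surgery removes $\nu(\gamma')\times I\cong S^1\times (B^3,B^1)$, where $\gamma'$ is a push-off of $\gamma$ and the $B^1$ is an arc of $F$. It reglues $S^1\times(B^3,K_{\text{arc}})$, with $K_{\text{arc}}$ the knotted arc of $K$. The ambient manifold is unchanged, and $F_{K,\gamma}$ agrees with $F$ outside a small neighborhood $W\cong S^1\times B^3$. Both $F$ and $F_{K,\gamma}$ meet $\partial W$ in the same pair of circles.

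For the topological part I would invoke the standard criterion for simply connected ambient manifolds (Lee--Wilczy\'nski, refined by Conway--Powell): two locally flat embeddings $F,F'\subset X$ of the same genus are topologically isotopic when the following hold.
\begin{enumerate}[label=(\roman*)]
\item They are homologous.
\item Both complements have $\pi_1=1$.
\item The Kirby--Siebenmann-type and equivariant intersection-form data agree, or more concretely there is a homeomorphism of the exteriors that is the identity on the boundary and induces the right maps on $H_2$.
\end{enumerate}
Items (i) and (ii) hold, since rim surgery does not change $[F]$ and Proposition~\ref{prop:FS-rim-formula} supplies $\pi_1$.

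For (iii) I would use the original Fintushel--Stern observation. The exterior of $F_{K,\gamma}$ is obtained from $M_F$ by removing $S^1\times (B^3\setminus\nu(\text{arc}))$ and gluing in $S^1\times(B^3\setminus\nu K_{\text{arc}})$. Equivalently, one replaces $S^1\times(S^3\setminus\nu U)$ by $S^1\times(S^3\setminus \nu K)$, that is, knot surgery along the rim torus. Since $S^3\setminus\nu K$ is a homology circle, this piece has the same homology and the same boundary identification as $S^1\times S^1\times D^2$. Hence the exteriors have isomorphic intersection forms, rel boundary, via an isomorphism that is the identity on the boundary. Freedman's classification of simply connected 4-manifolds with boundary, in the Boyer form, then gives a homeomorphism of exteriors extending the identity on $F\times S^1$. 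Gluing back $\nu F$ yields a homeomorphism of pairs $(X,F)\to(X,F_{K,\gamma})$ that is homotopic to the identity on $X$. The isotopy criterion then upgrades this to a topological isotopy.

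I expect the main obstacle to be this last step, namely making sure the boundary-fixing homeomorphism of exteriors realizes the identity on $H_2(X)$, so that the resulting ambient homeomorphism is isotopic to the identity. The point is that we need isotopy, not merely an equivalence of pairs. I would handle it by checking the Boyer/Conway--Powell hypotheses directly. The Mayer--Vietoris identification of $H_2$ of the two exteriors is canonical away from $W$, so it commutes with inclusion into $X$, and Kirby--Siebenmann invariants vanish for both since everything is smooth.
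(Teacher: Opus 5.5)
Your outline is sound, and its first and last steps agree with the paper: you propagate $\pi_1=1$ surgery by surgery, and you pass from an ambient homeomorphism of pairs to the identification of $\mathcal E^0$ via Lemma~\ref{lem:extendable-conjugacy}. The isotopy step is where you diverge. The paper notes that rim surgery preserves genus, homology class and simple connectivity of the complement \cite{FintushelStern1997}. Boyer's theorem \cite[Theorem~F]{Boyer1993} (see also \cite[Theorem~7.1]{Sunukjian2015}) then gives a topological isotopy from these data alone. So your condition~(iii) is superfluous; equivariant intersection-form data matter when the complement has nontrivial group, as in Conway--Powell's $\pi_1=\Z$ setting, not here.

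You instead rebuild the homeomorphism by hand: rim surgery as knot surgery on a rim torus, the homology-$T^2\times D^2$ replacement $S^1\times(S^3\setminus\nu K)$, Boyer's realization for the exteriors, gluing, then an upgrade. In effect this reproves the cited theorem in this special case. It has the merit of explaining why the data agree, but it incurs two checks that the citation packages for you.

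\begin{enumerate}
\item The glued map must act trivially on all of $H_2(X)$, including classes with nonzero intersection with $F$. These do not come from the exterior, and your Mayer--Vietoris remark handles only classes that do. A relative argument through $(X,\nu F)$ fills the gap. So does a collapse map $X\to X$, homotopic to the identity, carrying $F_{K,\gamma}$ to $F$ and restricting to the degree-one collapse on exteriors.
\item The upgrade to an isotopy is the Perron--Quinn theorem: a homeomorphism of a closed simply connected $4$--manifold acting trivially on $H_2$ is topologically isotopic to the identity. It should be invoked by name rather than folded into ``the isotopy criterion.''
\end{enumerate}

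Finally, Proposition~\ref{prop:FS-rim-formula} and your boundary $F\times S^1$ both presuppose $F^2=0$. The lemma, however, is also applied with $F^2>0$ in Theorem~\ref{thm:projective-rigidity-detailed}. The $\pi_1$ statement holds in general by \cite{FintushelStern1997}, and your exterior argument adapts to circle-bundle boundaries.
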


\begin{proof}
Rim surgery preserves the genus, homology class, and simple
connectivity of the complement
\cite{FintushelStern1997}.  The surfaces are therefore topologically
isotopic by Boyer's theorem
\cite[Theorem~F]{Boyer1993}; see also
\cite[Theorem~7.1]{Sunukjian2015}.  The assertion about extendable
groups follows from
Lemma~\ref{lem:extendable-conjugacy}.
\end{proof}

\begin{remark}
\label{rmk:cyclic-rim}
The preceding rim-surgery results have analogues for surfaces with
finite cyclic complement group, using \emph{twist rim surgery}; see
\cite{Kim2006} for the definition.  As shown in the proof of
\cite[Theorem~3.4]{Kim2006}, twist rim surgery has the same effect on
the relative Seiberg--Witten invariant as ordinary rim surgery.
Under the hypotheses of
\cite[Propositions~2.3 and~2.4]{KimRubermanSmooth}, a $1$--twist
preserves the complement group, and more generally an $m$--twist
preserves a cyclic complement group $\Z_d$ when $(m,d)=1$.  Under the
hypotheses of \cite[Theorem~1.3]{KimRubermanTopological}, the
resulting surfaces are topologically isotopic to the original
surface.  We use only the $1$--twist case in
Remark~\ref{rmk:plane-curves}.
\end{remark}

If $F^2=n>0$, the Relative Seiberg-Witten invariant is most simply defined by reducing to the previous case through blowups. Specifically, let
\[
 \widetilde X=X\#n\CPb
\]
and let $\widetilde F\subset\widetilde X$ be the proper transform
obtained by blowing up $n$ points of $F$.  Thus
$\widetilde F^2=0$.  Throughout the paper, by the relative
Seiberg--Witten invariant of $(X,F)$ we mean
\[
 \SW^{\rm rel}_{X,F}
 :=
 \SW^{\rm rel}_{\widetilde X,\widetilde F}.
\]
Rim surgeries will always be performed away from the blow-up points.
This is the convention used in the nonnegative self-intersection
setting of Fintushel and Stern.

\subsection{Rim Newton polytopes}

We now extract convex-geometric information from the support of the
relative Seiberg--Witten invariant. We only use standard facts on 
Newton polytopes, Minkowski sums, and zonotopes; for more background see e.g.
\cite{Sturmfels1996,Ziegler1995}.

Let $R$ be a rank-$n$ lattice.  A choice of basis identifies its group ring with
a Laurent polynomial ring:
\[
 \Z[R]\cong
 \Z[x_1^{\pm1},\ldots,x_n^{\pm1}].
\]
Let
\[
 p=\sum_{u\in R}a_u e^u\in\Z[R]
\]
be a nonzero Laurent polynomial.  
Here $e^u$ denotes the formal group-ring element indexed by $u$; after choosing a basis of $R$, it becomes the corresponding Laurent monomial. 
Its \emph{support} and \emph{Newton polytope} are
\[
 {\rm supp}(p)=\{u\in R\mid a_u\neq0\},
 \qquad
 {\rm Newt}(p)
 =
 {\rm conv}\bigl({\rm supp}(p)\bigr)
 \subset R\otimes\R.
\]
Here ${\rm conv}$ denotes the convex hull.
Since ${\rm supp}(p)$ is finite, its convex hull is a polytope.
For two polytopes $P,Q\subset R\otimes\R$, their \emph{Minkowski sum} is
\[
 P+Q=\{x+y\mid x\in P,\ y\in Q\}.
\]
If $p,q\in\Z[R]$ are nonzero, then the standard product rule gives
\[
 {\rm Newt}(pq)
 =
 {\rm Newt}(p)+{\rm Newt}(q).
 \tag{2.3}
\]

A Minkowski sum of finitely many line segments is called a
\emph{zonotope}.

We apply this to the rim torus lattice
\[
 R=\mathcal R_F\cong H_1(F;\Z).
\]
The affine exponent set $\mathcal A_F$ carries a free translation
action by $2R$. For each orbit
\[
 \lambda\in\mathcal A_F/(2R)
\]
which meets the support of $f_{X,F}$, choose
$\alpha_\lambda\in\lambda$ and write
\[
 f_\lambda
 =
 \sum_{r\in R}
 a_{\alpha_\lambda+2r}e^{2r}
 \in\Z[2R].
\]
Equivalently, the portion of $f_{X,F}$ supported on $\lambda$ is
$e^{\alpha_\lambda}f_\lambda$; thus $f_\lambda$ records that orbit after translating $\alpha_\lambda$ to the origin. See Example~\ref{ex:g1-rim-newton} in the next section for a 
genus $1$ illustration of the polynomials $f_\lambda$ and their Newton
polytopes.

Changing $\alpha_\lambda$ to $\alpha_\lambda+2s$, for $s\in R$,
multiplies $f_\lambda$ by the monomial $e^{-2s}$.  Thus the Newton
polytope itself depends on the chosen affine origin only by
translation, and its translation class
\[
 \bigl[{\rm Newt}(f_\lambda)\bigr]
\]
is well defined.

We call the finite collection
\[
 \mathcal N_R(f_{X,F})
 =
 \left\{
 \bigl[{\rm Newt}(f_\lambda)\bigr]
 \ \middle|\
 \lambda\in\mathcal A_F/(2R),\ f_\lambda\neq0
 \right\}
\]
the \emph{rim Newton profile} of $(X,F)$.  Here $[P]$ denotes the translation class of a polytope $P$, and
repeated classes are retained with their multiplicities.

\begin{lem}
\label{lem:rim-Newton-profile}
Let
\[
 \Phi\colon(X,F)\rightarrow(X',F')
\]
be a diffeomorphism of pairs, and let
$A\colon\mathcal R_F\to\mathcal R_{F'}$
be the induced lattice isomorphism.  Then
\[
 A\bigl(\mathcal N_{\mathcal R_F}(f_{X,F})\bigr)
 =
 \mathcal N_{\mathcal R_{F'}}(f_{X',F'}).
\]
Here $A$ acts on translation classes through its real-linear
extension, so that $A([P])=[A(P)]$.

Moreover, if $0\neq q\in\Z[2R]$, then
\[
 \mathcal N_R(f_{X,F}q)
 =
 \left\{
 [P+{\rm Newt}(q)]
 \ \middle|\
 [P]\in\mathcal N_R(f_{X,F})
 \right\}.
\]
\end{lem}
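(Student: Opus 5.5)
The plan is to prove the two assertions separately. Both reduce to bookkeeping with the orbit decomposition of $\mathcal A_F$ under $2R$, together with the product rule~(2.3).

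\emph{Naturality.} By Proposition~\ref{prop:FS-rim-formula}, $\Phi$ induces a bijection $\Phi^*\colon\mathcal A_{F'}\to\mathcal A_F$ of affine exponent sets, coming from pullback of ${\rm Spin}^c$ structures and first Chern classes. Its inverse is affine, and its linear part restricts on translation directions to $A$. Consequently it is equivariant for the translation actions: it sends $\alpha+2r$ to $\Phi_\ast\alpha+2A(r)$, writing $\Phi_\ast$ for the induced map $\mathcal A_F\to\mathcal A_{F'}$.

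By the same proposition, $f_{X',F'}$ is obtained from $f_{X,F}$ by transporting coefficients along $\Phi_\ast$, up to the ambiguities $\doteq$. These ambiguities are a monomial factor on each affine component and a sign on each component. A monomial factor only translates Newton polytopes, and a sign does not change supports, so neither affects the profile.

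I will then check the following.
\begin{enumerate}
\item $\Phi_\ast$ maps $2R$-orbits $\lambda$ bijectively onto $2R'$-orbits $\lambda'$. Orbits meeting the support correspond, because coefficients are preserved up to sign.
\item Choosing $\alpha_{\lambda'}=\Phi_\ast\alpha_\lambda$, one gets $f_{\lambda'}=\pm A(f_\lambda)$, where $A$ acts on $\Z[2R]$ by $e^{2r}\mapsto e^{2A(r)}$. Hence ${\rm supp}(f_{\lambda'})=A({\rm supp}(f_\lambda))$.
\item Since $A$ is linear, convex hulls commute with its real extension, giving ${\rm Newt}(f_{\lambda'})=A({\rm Newt}(f_\lambda))$.
\end{enumerate}
Multiplicities are preserved because the orbit correspondence is a bijection. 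The translation classes are well defined under change of origin, as noted before the lemma.

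\emph{Multiplication by $q$.} Since $q$ lies in $\Z[2R]$, multiplication by $q$ preserves each orbit $\lambda$. The $\lambda$-part of $f_{X,F}q$ is therefore $e^{\alpha_\lambda}f_\lambda q$. Because $\Z[2R]$ is a Laurent polynomial ring, hence an integral domain, $f_\lambda q\neq0$ exactly when $f_\lambda\neq0$. The orbits contributing to the profile are thus the same before and after multiplying, with the same multiplicities. By~(2.3), ${\rm Newt}(f_\lambda q)={\rm Newt}(f_\lambda)+{\rm Newt}(q)$, and passing to translation classes gives the stated formula.

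The main obstacle is the first step: making precise that the transported invariant has the stated equivariance, namely that the affine map on $\mathcal A_F$ has linear part restricting to $A$ on $2\mathcal R_F$. The sign and monomial ambiguities also need to be taken componentwise, on each of the one or two affine pieces. Both points are supplied by the naturality clause of Proposition~\ref{prop:FS-rim-formula}, so I will invoke it directly. I will also note that when $F^2>0$ the same argument runs on the blown-up pair. This requires extending $\Phi$ over the blowups, which is done by isotoping $\Phi$ to carry the chosen blow-up points of $F$ to those of $F'$.
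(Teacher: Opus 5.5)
Your proposal is correct and follows essentially the same route as the paper. In the first part, naturality gives an affine bijection of exponent sets sending $2r$-translates to $2A(r)$-translates, hence a bijection of $2R$-orbits under which each $f_\lambda$ is transported by $A$ up to sign and monomial. In the second part, $q\in\Z[2R]$ preserves each orbit, $\Z[2R]$ is an integral domain, and the product rule (2.3) finishes the argument.

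Your extra remarks on taking the ambiguities componentwise and on the blown-up case $F^2>0$ are consistent with how the paper handles these points elsewhere, though its proof of this lemma leaves them implicit.
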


\begin{proof}
Naturality of the relative invariant shows that $\Phi$ induces an
affine bijection of the exponent sets.  If two exponents differ by
$2r$, their images differ by $2A(r)$, so this bijection permutes the
$2R$--orbits.  Within one orbit, $f_\lambda$ records the differences
of the exponents from the chosen origin.  Under $\Phi$, these
differences are transformed by $A$, while changing the origin only
multiplies the polynomial by a monomial.  Thus
\[
 \bigl[{\rm Newt}(f_\lambda)\bigr]
 \longmapsto
 \bigl[A({\rm Newt}(f_\lambda))\bigr].
\]
The orbits may be permuted, but the resulting multiset of translation
classes is preserved.

Since the exponents of $q$ lie in $2R$, multiplication by $q$ does not
mix the different orbits in $\mathcal A_F/(2R)$; on the orbit
$\lambda$, the new polynomial is $f_\lambda q$.  Since $R$ is a
lattice, $\Z[2R]$ is a Laurent polynomial ring and hence an integral
domain.  Thus
\[
 f_\lambda q\neq0
\]
whenever $f_\lambda\neq0$ and $q\neq0$.  Equation~(2.3) gives
\[
 {\rm Newt}(f_\lambda q)
 =
 {\rm Newt}(f_\lambda)+{\rm Newt}(q),
\]
which proves the second assertion, with multiplicities preserved.
\end{proof}

\begin{remark}
\label{rk:newton-forgets}
Although we do not need this refinement here, retaining the collection
of polynomials $f_\lambda$, up to the monomial and sign ambiguities
above, gives a finer invariant whose stabilizer may be smaller than
that of the rim Newton profile, which records only the translation
classes of their Newton polytopes and therefore forgets their
coefficients.
\end{remark}

The zonotope contributed by the rim-surgery factors is centrally
symmetric, so both $I$ and $-I$ preserve it.  Thus its symplectic
stabilizer necessarily contains $\{\pm I\}$.  We now choose the directions and weights so that this is the full stabilizer: distinct
weights prevent permutations of the generating directions, while a
connected intersection graph forces all remaining sign choices to
agree.

\begin{lem}
\label{lem:rigid-zonotope}
For every $g\geq1$, there is an integral basis
$\{v_1,\ldots,v_{2g}\}$ of $H_1(\Sigma_g;\Z)$ and pairwise distinct
positive integers $d_1,\ldots,d_{2g}$ such that the stabilizer in
$\rm{Sp}(2g,\Z)$ of the zonotope
\[
 Z
 =
 \sum_{i=1}^{2g}
 [-d_iv_i,d_iv_i]
\]
is $\{\pm I\}$.
\end{lem}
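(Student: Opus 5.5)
We will choose a basis whose segments the zonotope $Z$ can recover, and then show that a symplectic map preserving $Z$ can only permute these segments and flip their signs. Since the lengths $d_i$ are distinct, no permutation is possible, and the symplectic form will force all the signs to agree.

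\emph{Step 1: Recovering the segments.} We take the basis $v_1=a_1$ and $v_{2}=b_1$ when $g=1$. In general, we start from a symplectic basis $a_1,b_1,\ldots,a_g,b_g$ and choose the chain basis
\[
a_1,\ b_1,\ a_2+b_1,\ b_2,\ a_3+b_2,\ \ldots
\]
This is an integral basis, by a unitriangular change of basis. Its consecutive intersection numbers are $\pm1$, so the graph $\Gamma$ on $\{1,\ldots,2g\}$, with an edge between $i$ and $j$ whenever $v_i\cdot v_j\neq0$, is connected (it contains a path).

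Because the $v_i$ are linearly independent, $Z$ is a parallelotope, the image of the cube $[-1,1]^{2g}$ under $x\mapsto\sum x_id_iv_i$. Its edges are exactly the translates of the segments $[-d_iv_i,d_iv_i]$. Any linear $A$ with $A(Z)=Z$ preserves the set of vertices $\{\sum\epsilon_id_iv_i\}$ and the edge structure. Hence $A$ permutes the edge directions, together with their lengths. Concretely, $A(d_iv_i)=\pm d_{\sigma(i)}v_{\sigma(i)}$ for some permutation $\sigma$, since the edge vectors of a parallelotope are $\pm$ the $2d_iv_i$.

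\emph{Step 2: Ruling out permutations.} Since $A$ is an integral automorphism, it sends primitive vectors to primitive vectors. Each $v_i$ is primitive, being a basis element. So $A(v_i)=\pm v_{\sigma(i)}$ and $d_i=d_{\sigma(i)}$. Because the $d_i$ are pairwise distinct, $\sigma=\mathrm{id}$ and $A(v_i)=\epsilon_iv_i$ with $\epsilon_i\in\{\pm1\}$.

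\emph{Step 3: Forcing the signs to agree.} If $A$ is symplectic, then $v_i\cdot v_j=\epsilon_i\epsilon_j\,(v_i\cdot v_j)$. Whenever $v_i\cdot v_j\neq0$, this gives $\epsilon_i=\epsilon_j$. Since $\Gamma$ is connected, all the $\epsilon_i$ are equal, so $A=\pm I$. Conversely, $\pm I$ preserves $Z$ because $Z$ is centrally symmetric. Any distinct weights work, for example $d_i=i$.

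The main obstacle is Step 1: justifying rigorously that a linear symmetry of the parallelotope permutes its generating segments. The approach will be to use that $A$ permutes the vertices of $Z$ and preserves the edges between them (as a combinatorial automorphism of the face lattice). The edges at any vertex are then the vectors $\pm2d_iv_i$. Alternatively, one can argue via the zonotope's unique decomposition into a Minkowski sum of segments up to the ordering of the summands.
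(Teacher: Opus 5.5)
Your argument is the paper's argument. You read off the edges of the parallelotope $Z$ with edge vectors $\pm 2d_iv_i$. Primitivity (equivalently, lattice length) together with the distinct weights then gives $A(v_i)=\varepsilon_i v_i$. Finally, the symplectic form and a connected intersection graph force all the $\varepsilon_i$ to agree. Your worry in Step~1 is handled the same way the paper handles it: a linear automorphism preserving $Z$ maps faces to faces, so it maps edges to edges.

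There is, however, a false claim in your choice of basis. Write it as $v_{2i-1}=a_i+b_{i-1}$ (with $b_0:=0$) and $v_{2i}=b_i$. Then the consecutive intersection numbers are not all $\pm1$:
\[
 v_{2i}\cdot v_{2i+1}=b_i\cdot(a_{i+1}+b_i)=0 \qquad (1\leq i<g).
\]
So $\Gamma$ does not contain the path $v_1-v_2-\cdots-v_{2g}$ you invoke. The conclusion still holds, because
\[
 v_{2i-1}\cdot v_{2i}=a_i\cdot b_i=1,
 \qquad
 v_{2i-1}\cdot v_{2i+1}=(a_i+b_{i-1})\cdot(a_{i+1}+b_i)=a_i\cdot b_i=1 .
\]
Hence $\Gamma$ contains the path $v_1-v_3-\cdots-v_{2g-1}$, with each $v_{2i}$ attached to $v_{2i-1}$; this is a connected spanning tree.

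You should either give this corrected justification or switch to a basis that really is a chain in homology. One option is $a_1,b_1,a_1+a_2,b_2,a_2+a_3,\ldots,b_g$. Another is the paper's basis $v_{2i-1}=a_i$, $v_{2i}=b_i+b_{i+1}$ for $i<g$, and $v_{2g}=b_g$. The rest of your proof goes through unchanged.
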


\begin{proof} 
Let
\[
 V=H_1(\Sigma_g;\Z),
\]
viewed as a rank-$2g$ symplectic lattice, and let
$\{a_1,b_1,\ldots,a_g,b_g\}$ be a symplectic basis.  Set
\[
 v_{2i-1}=a_i
 \quad (1\leq i\leq g),
 \qquad
 v_{2i}=b_i+b_{i+1}
 \quad (1\leq i<g),
 \qquad
 v_{2g}=b_g.
\]
These classes form an integral basis: we have $a_i=v_{2i-1}$ and,
starting with $b_g=v_{2g}$, recover
$b_i=v_{2i}-b_{i+1}$ successively.

Their intersection graph $\Gamma$, in which two vertices are joined when their
algebraic intersection is nonzero, is connected.  Indeed, for
$1\leq i<g$,
\[
 v_{2i-1}\cdot v_{2i}\neq0,
 \qquad
 v_{2i}\cdot v_{2i+1}\neq0,
\]
and moreover $v_{2g-1}\cdot v_{2g}\neq0$.  Thus $\Gamma$ contains the
path
\[
 v_1-v_2-\cdots-v_{2g}.
\]

Choose $
 0<d_1<d_2<\cdots<d_{2g}$. 
Since the $v_i$ form a basis of $V$, the induced linear isomorphism $
 \R^{2g}\rightarrow V\otimes\R
$
sending the standard basis to the $v_i$ carries the box 
\[
\prod_{i=1}^{2g}[-d_i,d_i]
\]
onto $Z$. So the edges of $Z$ are parallel to the lines
$\R v_i$, and a parallel edge to $\R v_i$ has lattice length $2d_i$.

Let $A\in\rm{Sp}(2g,\Z)$ preserve $Z$.  It sends edges to edges and,
as an integral lattice automorphism, preserves their lattice lengths. Here the lattice length of an edge with edge vector $kv$, where $v$
is primitive, is $|k|$.
Since the $2d_i$ are pairwise distinct, $A$ preserves each line
$\R v_i$.  Since $v_i$ is primitive, it follows that
\[
 A(v_i)=\varepsilon_i v_i,
 \qquad
 \varepsilon_i\in\{\pm1\}.
\]
Now suppose $v_i\cdot v_j\neq0$.  Since $A$ is symplectic, it
preserves the intersection pairing, and hence
\[
 v_i\cdot v_j
 =
 A(v_i)\cdot A(v_j)
 =
 \varepsilon_i\varepsilon_j(v_i\cdot v_j).
\]
Thus $\varepsilon_i\varepsilon_j=1$, so
$\varepsilon_i=\varepsilon_j$.  The signs therefore agree along every
edge of the intersection graph $\Gamma$.  Since $\Gamma$ is connected, they
all agree, and consequently $A=I$ or $A=-I$.
\end{proof}

Since each segment $[-d_iv_i,d_iv_i]$ is centered at the origin, the
zonotope $Z$ is centrally symmetric, so $Z=-Z$.  Since the $v_i$ form
a basis, $Z$ is also full dimensional.  In the next lemma, $Z$ denotes
an arbitrary centrally symmetric full-dimensional lattice polytope,
not necessarily the particular zonotope constructed above; in our
application it will be that zonotope.  

Let $R$ be a rank-$n$ lattice and set
\[
 {\rm GL}(R):={\rm Aut}_{\Z}(R);
\]
after choosing a basis, ${\rm GL}(R)\cong{\rm GL}(n,\Z)$.  The next
lemma shows that, for sufficiently large $m$, any lattice automorphism
preserving the translation classes of the polytopes $P+mZ$ must
preserve $Z$ itself.  Thus the added zonotope remains detectable in
the presence of the Newton polytopes coming from the original relative
invariant.

\begin{lem}
\label{lem:large-zonotope}
Let $R$ be a rank-$n$ lattice, let $\mathcal P$ be a nonempty finite
collection of lattice polytopes in $R\otimes\R$, and let $Z=-Z$ be a
full-dimensional lattice polytope.  For all sufficiently large
positive integers $m$, every $A\in{\rm GL}(R)$ preserving the
collection of translation classes
\[
 \left\{
 [P+mZ]\mid P\in\mathcal P
 \right\}
\]
satisfies $A(Z)=Z$.
\end{lem}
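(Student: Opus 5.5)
The plan is to remove the translation ambiguity by passing to difference bodies, and then to show that $A(Z)$ and $Z$ are squeezed together up to a factor $1+O(1/m)$. A finiteness argument then forces equality for a single large $m$, uniformly in $A$. For a polytope $K$, write $D(K)=K-K=\{x-y\mid x,y\in K\}$. This is a centrally symmetric polytope containing $0$, and it depends only on the translation class $[K]$. Minkowski sums commute with $D$, and $Z=-Z$ gives $D(Z)=2Z$. Hence
\[
 D(P+mZ)=D(P)+2mZ .
\]
Linear maps also commute with $D$: $D(A(K))=A(D(K))$.

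First step. Suppose $A$ preserves the collection of classes $[P+mZ]$. Then for each $P\in\mathcal P$ there is $Q\in\mathcal P$ with $[A(P)+mA(Z)]=[Q+mZ]$. Applying $D$ gives
\[
 A(D(P))+2mA(Z)=D(Q)+2mZ .
\]
Since $0\in A(D(P))$, we get $2mA(Z)\subseteq D(Q)+2mZ$. Now $Z$ is full dimensional and centrally symmetric, so $0$ lies in its interior and $Z$ contains a ball $B_r$. Let $C$ bound the radii of the finitely many $D(Q)$, $Q\in\mathcal P$; this constant is independent of $A$ and $m$. Then $D(Q)\subseteq (C/r)Z$, and therefore
\[
 A(Z)\subseteq \Bigl(1+\tfrac{c}{m}\Bigr)Z,\qquad c=\tfrac{C}{2r}.
\]
The map $A^{-1}$ also preserves the collection, because the collection is finite and $A$ permutes it. The same argument yields $A^{-1}(Z)\subseteq(1+c/m)Z$, that is, $Z\subseteq(1+c/m)A(Z)$.

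Second step (finiteness). For $m\ge c$ the first inclusion gives $A(Z)\subseteq 2Z$. Then $A$ maps each basis vector of $R$ into the bounded set $2\cdot\max_j\|e_j\|_Z\cdot Z$, which contains only finitely many lattice points. So every such $A$ lies in a fixed finite set $S\subset{\rm GL}(R)$, independent of $m$.

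Third step. For each $A\in S$ with $A(Z)\neq Z$, the two inclusions $A(Z)\subseteq(1+\varepsilon)Z$ and $Z\subseteq(1+\varepsilon)A(Z)$ cannot both hold for arbitrarily small $\varepsilon>0$. Letting $\varepsilon\to0$ they would give $A(Z)\subseteq Z\subseteq A(Z)$, since both bodies are compact. Hence each such $A$ has a threshold $\varepsilon_A>0$ below which the inclusions fail. Take $m$ larger than $c$ and than $c/\varepsilon_A$ for the finitely many $A\in S$ with $A(Z)\ne Z$. For this $m$, every $A$ preserving the collection satisfies $A(Z)=Z$, and the same holds for every larger $m$.

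The main obstacle is uniformity in $A$: a priori ${\rm GL}(R)$ is infinite and the translations are uncontrolled. The difference-body trick removes the translations. The crude bound $A(Z)\subseteq 2Z$ confines $A$ to a finite set. Once both are in place, the remaining limiting argument is routine.
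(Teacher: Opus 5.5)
Your proof is correct and follows essentially the same route as the paper: difference bodies to remove the translations, the containment $A(Z)\subseteq(1+c/m)Z$, finiteness of integral matrices with bounded norm, and a limiting argument to force $A(Z)=Z$. The differences are only organizational. You argue directly with an explicit threshold, and you get the reverse containment from $A^{-1}$, which does preserve the finite collection. The paper instead argues by contradiction: it pigeonholes to fix $Q$ and a constant $A$, then passes to the limit in the exact identity $A(Z+\tfrac{1}{2m_k}\Delta(P))=Z+\tfrac{1}{2m_k}\Delta(Q)$.
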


\noindent
Here preserving the collection means preserving it setwise: for every
$P\in\mathcal P$, there is some $Q\in\mathcal P$ such that
$A(P+mZ)$ is a translate of $Q+mZ$.

\begin{proof}
Suppose otherwise.  Then there are positive integers $m_k\to\infty$
and $A_k\in{\rm GL}(R)$ preserving these collections but satisfying
$A_k(Z)\neq Z$.  Fix $P\in\mathcal P$.  For every $k$, there are
$Q_k\in\mathcal P$ and $t_k\in R\otimes\R$ such that
\[
 A_k(P+m_kZ)=Q_k+m_kZ+t_k.
\]
Since $\mathcal P$ is finite, some $Q\in\mathcal P$ occurs as $Q_k$
for infinitely many $k$, by the pigeonhole principle.  After passing to this subsequence, we have
\[
 A_k(P+m_kZ)=Q+m_kZ+t_k.
 \tag{2.4}
\]
For a polytope $S\subset R\otimes\R$, let
$\Delta(S):=S-S$ denote its \emph{difference body}.  This operation is
translation invariant, equivariant under linear maps, and additive
under Minkowski sums. Taking difference bodies in Equation~(2.4), and using translation
invariance, equivariance under $A_k$, and Minkowski additivity, gives
\[
 A_k\bigl(\Delta(P)+2m_kZ\bigr)
 =
 \Delta(Q)+2m_kZ.
 \tag{2.5}
 \label{eq:difference-body}
\]
where we used $Z=-Z$.

Since $Z=-Z$ is full dimensional, the origin lies in the interior of
$Z$.  Each $\Delta(P')$ is compact, and $\mathcal P$ is finite, so
there is a constant $C>0$ such that
\[
 \Delta(P')\subset CZ
 \qquad\text{for every }P'\in\mathcal P.
\]
Since $0\in\Delta(P)$, we have
\[
 2m_kZ\subset\Delta(P)+2m_kZ.
\]
Applying $A_k$ and using Equation~\eqref{eq:difference-body}, we obtain
\[
 A_k(2m_kZ)
 \subset
 \Delta(Q)+2m_kZ
 \subset
 CZ+2m_kZ
 =
 (2m_k+C)Z.
\]
Equivalently,
\[
 A_k(Z)\subset
 \left(1+\frac{C}{2m_k}\right)Z.
 \tag{2.6}
 \label{eq:Ak-bound}
\]

Once again, since $Z=-Z$ is a full-dimensional convex body, its \emph{Minkowski
functional}
\[
 \|x\|_Z:=\inf\{t>0\mid x\in tZ\}
\]
is a norm whose unit ball is $Z$; see
\cite{Ball1997}. Let
\[
 \|A\|_{{\rm op},Z}
 :=
 \sup_{\|x\|_Z\leq1}\|A(x)\|_Z
\]
denote the corresponding operator norm.  Equation~\eqref{eq:Ak-bound}
gives
\[
 \|A_k\|_{{\rm op},Z}
 \leq
 1+\frac{C}{2m_k},
\]
so these operator norms are uniformly bounded.

Choose a basis of $R$ and represent each $A_k$ by an integral
$n\times n$ matrix.  By equivalence of norms on the finite-dimensional
space $R\otimes\R$, the preceding bound gives a uniform bound on the
Euclidean operator norms of these matrices, and hence on all their
entries.  Since the entries are integers, only finitely many matrices
can occur.  One of them therefore occurs for infinitely many $k$;
passing to the corresponding subsequence, which still satisfies
$m_k\to\infty$, we may assume that $A_k=A$ is constant.

Dividing Equation~\eqref{eq:difference-body} by $2m_k$ gives
\[
 A\left(
 Z+\frac{1}{2m_k}\Delta(P)
 \right)
 =
 Z+\frac{1}{2m_k}\Delta(Q).
\]
Since $\Delta(P)$ and $\Delta(Q)$ are bounded, the scaled difference
bodies shrink uniformly to $\{0\}$ as $m_k\to\infty$.  Thus the two
sides converge to $A(Z)$ and $Z$, respectively, and hence $A(Z)=Z$,
contradicting the choice of the $A_k$.
\end{proof}

\subsection{Proof of Theorem~\ref{thm:a}}

We prove the following slightly stronger form of Theorem~\ref{thm:a}.

\begin{thm}
\label{thm:projective-rigidity-detailed}
Let $F\subset X$ satisfy the hypotheses of
Theorem~\ref{thm:a}.  There are primitive classes
$v_1,\ldots,v_{2g}$ forming an integral basis of $H_1(F;\Z)$,
simple closed curves $\gamma_i\subset F$ representing them, and knots
$K_i\subset S^3$ such that, setting
\[
 F_0=F,
 \qquad
 F_i=(F_{i-1})_{K_i,\gamma_i}
 \quad (i=1,\ldots,2g),
 \qquad
 F'=F_{2g},
\]
the following hold:
\begin{enumerate}
\item
$F'$ is topologically isotopic to $F$, and under this identification
\[
 \mathcal E^0(X,F')=\mathcal E^0(X,F);
\]

\item
the pairs $(X,F)$ and $(X,F')$ are not diffeomorphic;

\item
$F'$ is projectively rigid.
\end{enumerate}
After each surgery, we use the natural identification of the new
surface with $F$ to interpret the subsequent curve $\gamma_i$.
\end{thm}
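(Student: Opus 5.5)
We first reduce to the case $F^2=0$. If $F^2=n>0$, we pass to the proper transform $\widetilde F\subset\widetilde X=X\#n\CPb$, perform all rim surgeries away from the blow-up points, and use the convention $\SW^{\rm rel}_{X,F}=\SW^{\rm rel}_{\widetilde X,\widetilde F}$. A smoothly extendable map of $(X,F)$ should then be isotoped to fix $n$ points of $F$ together with their normal discs, so that it lifts to a diffeomorphism of $(\widetilde X,\widetilde F)$ inducing the same map on $H_1(F;\Z)$. This is the step where care is needed. Granting it, it suffices to treat $F^2=0$ with $\pi_1(M_F)=1$.

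Next we choose the data. Lemma~\ref{lem:rigid-zonotope} gives a basis $v_1,\ldots,v_{2g}$ and distinct positive weights $d_i$ whose zonotope $Z=\sum[-d_iv_i,d_iv_i]$ has symplectic stabilizer $\{\pm I\}$. Each primitive $v_i$ is represented by a simple closed curve $\gamma_i$. The profile $\mathcal N_R(f_{X,F})$ is nonempty because $f_{X,F}\neq0$. Let $m$ be large enough for Lemma~\ref{lem:large-zonotope} applied to the finite collection $\mathcal P$ of representatives of this profile and to $Z$. We then pick knots $K_i$ whose symmetrized Alexander polynomial has degree exactly $md_i$, for example connected sums of $md_i$ trefoils, or a twist knot of appropriate genus. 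Under $t=r_{v_i}^2$, ${\rm Newt}(\Delta_{K_i}(r_{v_i}^2))$ is the segment $[-2md_iv_i,2md_iv_i]$ in $R\otimes\R$, using exponents in $2R$. By (2.2) and Lemma~\ref{lem:rim-Newton-profile}, the profile of $(X,F')$ is $\{[P+2mZ]\}$. We would replace $m$ by $2m$ in applying Lemma~\ref{lem:large-zonotope}, or simply absorb the factor $2$ into the weights.

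The three assertions then follow. Assertion (1) is Lemma~\ref{lem:rim-topology}. For (3), suppose $\phi\in\mathcal E^\infty(X,F')$. By Proposition~\ref{prop:FS-rim-formula} its extension acts on $\mathcal R_{F'}$ by $\phi_*$, and by Lemma~\ref{lem:rim-Newton-profile} with $X'=X$, $F'=F'$, the map $\phi_*$ preserves the profile $\{[P+2mZ]\}$. Lemma~\ref{lem:large-zonotope} then gives $\phi_*(Z)=Z$. Since $\phi_*\in{\rm Sp}$, Lemma~\ref{lem:rigid-zonotope} forces $\phi_*=\pm I$, so $\rho(\phi)=1$.

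For (2), suppose a diffeomorphism $(X,F)\to(X,F')$ exists, inducing a lattice isomorphism $A$. Then $A$ carries the profile of $F$ bijectively onto that of $F'$, counted with multiplicity. We compare the sums of volumes, or equivalently the maximal dimension or volume, of the profile polytopes. Every polytope $P+2mZ$ is full dimensional with volume at least $\mathrm{vol}(2mZ)>\mathrm{vol}(P)$ for $m$ large, because $P+2mZ\supseteq p+2mZ$ and $2mZ$ dominates. This is impossible, since lattice automorphisms preserve volume. The main obstacle is the bookkeeping in the blow-up reduction, that is, ensuring that extendable maps lift and act compatibly on the rim lattice. The convex-geometric steps are direct applications of the lemmas above.
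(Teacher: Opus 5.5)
Your proposal is correct and follows the paper's proof essentially step for step: the same zonotope from Lemma~\ref{lem:rigid-zonotope} is scaled by $2m$ through rim surgeries whose Alexander polynomials have degree $md_i$. Then Lemmas~\ref{lem:large-zonotope} and~\ref{lem:rigid-zonotope} give (3), Lemma~\ref{lem:rim-topology} gives (1), and the paper uses the same isotope-then-lift blow-up reduction for $F^2>0$, applying that lifting also to a putative diffeomorphism of pairs for (2).

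The differences are cosmetic. Connected sums of $md_i$ trefoils work in place of the paper's $T(2,2md_i+1)$; twist knots would not, since they all have genus one. Your volume comparison also works, because lattice automorphisms have determinant $\pm1$ and $\mathrm{vol}(2mZ)\to\infty$; it replaces the paper's lattice-point count $\#_R(P+2mZ)\geq\prod_i(4md_i+1)$.
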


\begin{proof}
Assume first that $F^2=0$, set $f:=f_{X,F}\neq0$, and choose a
representative of every translation class in the finite rim Newton
profile.  Denote the resulting collection of polytopes, with
multiplicities, by $\mathcal P$.  Choose $v_i,d_i$ and $Z$ as in
Lemma~\ref{lem:rigid-zonotope}.  Every primitive class in
$H_1(F;\Z)$ is represented by a nonseparating simple closed curve, so
choose $\gamma_i$ representing $v_i$.

For a positive integer $m$, let
\[
 K_i=T(2,2md_i+1).
\]
Its symmetrized Alexander polynomial is
\[
 \Delta_{K_i}(t)
 =
 \sum_{j=-md_i}^{md_i}
 (-1)^{md_i-j}t^j.
\]
Consequently,
\[
 \Delta_{K_i}(r_{v_i}^2)
 =
 \sum_{j=-md_i}^{md_i}
 (-1)^{md_i-j}e^{2jv_i},
\]
and hence
\[
 {\rm Newt}
 \bigl(\Delta_{K_i}(r_{v_i}^2)\bigr)
 =
 [-2md_iv_i,2md_iv_i].
\]

Set
\[
 q_m
 :=
 \prod_{i=1}^{2g}
 \Delta_{K_i}(r_{v_i}^2).
\]
Every exponent occurring in
$\Delta_{K_i}(r_{v_i}^2)$ has the form $2jv_i\in2R$, so
$q_m\in\Z[2R]$.
By the product rule for Newton polytopes,
\[
\begin{aligned}
 {\rm Newt}(q_m)
 &=
 \sum_{i=1}^{2g}
 {\rm Newt}\bigl(\Delta_{K_i}(r_{v_i}^2)\bigr)\\
 &=
 \sum_{i=1}^{2g}
 [-2md_iv_i,2md_iv_i]\\
 &=
 2m\sum_{i=1}^{2g}[-d_iv_i,d_iv_i]
 =
 2mZ,
\end{aligned}
\]
where the sums are Minkowski sums.  The iterated rim-surgery formula
gives
\[
 f_{X,F'}
 \doteq
 f\prod_{i=1}^{2g}\Delta_{K_i}(r_{v_i}^2)
 =
 fq_m.
\]

Since all the exponents of $q_m$ lie in $2R$, multiplication by
$q_m$ preserves each orbit in $\mathcal A_F/(2R)$.  More precisely,
on the orbit $\lambda$, the polynomial $f_\lambda$ is replaced by
$f_\lambda q_m$.  Thus, if
$[{\rm Newt}(f_\lambda)]$ is represented by
$P\in\mathcal P$, Lemma~\ref{lem:rim-Newton-profile} gives
\[
 \bigl[{\rm Newt}(f_\lambda q_m)\bigr]
 =
 [P+{\rm Newt}(q_m)]
 =
 [P+2mZ].
\]
Consequently,
\[
 \mathcal N_R(f_{X,F'})
 =
 \left\{
 [P+2mZ]\mid P\in\mathcal P
 \right\},
 \tag{2.5}
\]
with multiplicities retained.

Let $\phi\in\mathcal E^\infty(X,F')$, and let
$\Phi\in\rm{Diff}^+(X)$ be an extension.  Under the natural
identification
\[
 R=\mathcal R_{F'}\cong H_1(F;\Z),
\]
the induced action
\[
 A=\phi_*\in\rm{Sp}\bigl(H_1(F;\Z)\bigr)
\]
is also the action induced by $\Phi$ on the rim-torus lattice.
Lemma~\ref{lem:rim-Newton-profile} therefore shows that $A$ preserves
the collection in Equation~(2.5).  Applying
Lemma~\ref{lem:large-zonotope} with scale $2m$, for sufficiently large
$m$, gives $A(Z)=Z$.  Since $A$ is symplectic,
Lemma~\ref{lem:rigid-zonotope} then gives $A=\pm I$.  Therefore
\[
 \rho\bigl(\mathcal E^\infty(X,F')\bigr)=\{1\}.
\]

To distinguish $(X,F')$ from $(X,F)$, we count lattice points in
their rim Newton profiles.  Since the translation ambiguities in the
profile are by elements of $2R\subset R$, the quantity
\[
 \#_R(P):=\lvert P\cap R\rvert
\]
is well defined on the translation classes occurring here and is
preserved by integral affine automorphisms. For every $P\in\mathcal P$, choose a lattice vertex $p\in P\cap R$.
Since $v_1,\ldots,v_{2g}$ form an integral basis,
\[
 \#_R(P+2mZ)
 \geq
 \#_R(p+2mZ)
 =
 \#_R(2mZ)
 =
 \prod_{i=1}^{2g}(4md_i+1),
\]
which tends to infinity with $m$.
Since $\mathcal P$ is finite, after increasing $m$ every polytope in
the rim Newton profile of $F'$ has more lattice points than every
polytope in the profile of $F$.  The two profiles therefore cannot be
equivalent, and naturality of the relative invariant rules out a
diffeomorphism of pairs.

Finally, Lemma~\ref{lem:rim-topology} shows that all the $F_i$ are
topologically isotopic to $F$, while
Lemma~\ref{lem:extendable-conjugacy} identifies their topological
extendable mapping class groups.

Now suppose $F^2=n>0$.  Choose $n$ points of $F$, disjoint from the
rim-surgery regions, and use the corresponding points on all the
surfaces obtained by surgery.  Blow them up and apply the square-zero
case proved above to the proper transform $\widetilde{F}$ in $\widetilde{X}=X\#n\CPb$.
Suppose first that a self-diffeomorphism of the final pair is given
downstairs.  Its restriction to the surface carries the chosen
blow-up points to another $n$-tuple of points.  An isotopy of the
surface carries this tuple back to the chosen one, and isotopy
extension gives an ambient isotopy of the pair with the same property.
After composing with this isotopy, the diffeomorphism lifts to the
proper-transform pair.  The added isotopy restricts to a map isotopic
to the identity on the surface, so it does not change the induced
action on $H_1(F;\Z)$.  Projective rigidity for the square zero surface $\widetilde{F}$ in $\widetilde{X}$
therefore implies projective rigidity of $F$ in $X$.

Similarly, a diffeomorphism between the initial and final pairs
downstairs could be adjusted to match the chosen blow-up points and
then lifted to a diffeomorphism between their proper-transform pairs,
contradicting the square-zero case.  Thus the pairs downstairs are
smoothly inequivalent.  The topological isotopy follows as before from
Lemma~\ref{lem:rim-topology}.
\end{proof}

\medskip
\section{Breaking symmetries by further knotting}
\label{sec:filtration}

We now retain the intermediate stages of the construction in the
previous section.  The initial surfaces will be regular fibers of
Lefschetz fibrations with full monodromy. The reader may turn to \cite{GompfStipsicz1999, BaykurHamada:LF} for more on Lefschetz fibrations.

\subsection{Flexible fibers}

Let
\[
 \pi\colon X\rightarrow S^2
\]
be a Lefschetz fibration with regular fiber
$F=\pi^{-1}(b)$ and critical value set $C$.  Its monodromy
representation is
\[
 \mu_\pi\colon
 \pi_1(S^2\setminus C,b)\rightarrow\M(F).
\]
We say that $\pi$ has \emph{full monodromy} if $\mu_\pi$ is
surjective.  For a Lefschetz pencil, we use the same terminology for
the image in $\M(F)$ obtained after capping the boundary components around the base points.

\begin{prop}
\label{prop:full-monodromy}
Let $F$ be a regular fiber of a Lefschetz fibration or pencil on $X$.
Then
\[
 {\rm im}\,\mu_\pi\subseteq\mathcal E^\infty(X,F).
\]
Thus a regular fiber of a full-monodromy Lefschetz fibration or pencil
is smoothly flexible.  Moreover, in the full-monodromy case,
\[
 \pi_1(X\setminus\nu F)=1
\]
for a Lefschetz fibration.  For a Lefschetz pencil, after blowing up the
base points, the proper transform $\widetilde F\subset\widetilde X$ is
smoothly flexible and
\[
 \pi_1(\widetilde X\setminus\nu\widetilde F)=1.
\]
\end{prop}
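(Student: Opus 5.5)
For the inclusion ${\rm im}\,\mu_\pi\subseteq\mathcal E^\infty(X,F)$, fix a loop $\delta$ in $S^2\setminus C$ based at $b$. Over $\delta$ the fibration is a locally trivial bundle of surfaces, namely the mapping torus of $\mu_\pi(\delta)$. I would build an ambient diffeomorphism by \emph{fiber-transport along a push map}.

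Choose an embedded closed disk $D\subset S^2$ avoiding $C$ that contains $b$. Alternatively, work in a small annular neighborhood $N$ of a simple loop representing $\delta$. The cleanest version is as follows. Consider the point-pushing isotopy $h_t$ of $S^2\setminus C$ that drags $b$ around $\delta$ and returns to $b$; it is supported in a neighborhood of $\delta$ disjoint from $C$. Because $\pi$ is a submersion over $S^2\setminus C$, a horizontal distribution (for instance an Ehresmann connection, taken to be the product one near $\pi^{-1}(C)$) lets us lift the time-dependent vector field generating $h_t$ to a vector field on $X$ supported away from the singular fibers. Integrating it gives an ambient isotopy $H_t$ of $X$ with $\pi\circ H_t=h_t\circ\pi$. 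The time-one map $H_1$ preserves $F=\pi^{-1}(b)$, and its restriction to $F$ is the parallel transport around $\delta$, which is the monodromy $\mu_\pi(\delta)$ (up to the inverse, depending on convention, which is harmless since both lie in the image). Hence $\mu_\pi(\delta)\in\mathcal E^\infty(X,F)$, and full monodromy gives flexibility.

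For a pencil, I would first blow up the base points to obtain a fibration $\widetilde X\to S^2$ whose fiber is the proper transform $\widetilde F$. The $(-1)$-sphere sections are preserved by the lifted isotopy if the connection is chosen tangent to them. The same argument then applies, yielding the capped monodromy group inside $\mathcal E^\infty(\widetilde X,\widetilde F)$.

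For simple connectivity of $X\setminus\nu F$, I would use the standard presentation. Restrict $\pi$ over $S^2\setminus\nu b$, a disk $D'$ containing all of $C$; then $X\setminus\nu F=\pi^{-1}(D')$. Here $\pi_1(\pi^{-1}(D'))\cong\pi_1(F)/\langle\langle c_1,\dots,c_k\rangle\rangle$, where the $c_j$ are the vanishing cycles, since attaching the Lefschetz $2$-handles to $F\times D^2$ kills exactly these curves. The vanishing cycles generate the monodromy group as Dehn twists $t_{c_j}$.

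The key step, which I expect to be the main obstacle, is the following. Full monodromy means the $t_{c_j}$ generate $\M(F)$, and I must deduce that the normal closure of $\{c_j\}$ in $\pi_1(F)$ is everything. My plan is to note that if $N=\langle\langle c_j\rangle\rangle$, then the subgroup generated by the $t_{c_j}$ acts trivially on $\pi_1(F)/N$ up to inner automorphisms. Indeed, $t_c$ acts on a loop $x$ by inserting powers of conjugates of $c$, so modulo $N$ each $t_{c_j}$ acts trivially, and hence so does all of $\M(F)$. But $\M(F)$ acts on $H_1(F)$ through $\rm{Sp}$, which has no nonzero fixed vectors, so $H_1(F)/\langle [c_j]\rangle$ has trivial image under every $\phi_*-I$. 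This forces the classes $[c_j]$ to span $H_1$ (an orbit argument: $\phi_*x-x\in\langle [c_j]\rangle$ for all $\phi$, and such differences span $H_1$).

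For the full $\pi_1$ statement I would argue more concretely. Since the monodromy group is all of $\M(F)$, the set of vanishing cycles is closed under the action of that group up to conjugacy: the conjugate $\phi t_c\phi^{-1}=t_{\phi(c)}$ is again in the group. More directly, $N$ is invariant under every mapping class, because $\phi(c_j)$ equals $c_j$ twisted by products of the $c_i$, and $\phi$ is a product of the $t_{c_i}$. Every nonseparating curve is $\phi(c_1)$ for some $\phi$, with $c_1$ nonseparating, since some vanishing cycle must be nonseparating by the homology argument. So $N$ contains all nonseparating simple loops, which generate $\pi_1(F)$, and the quotient is trivial. The pencil case follows identically for $\widetilde X\setminus\nu\widetilde F$, whose fundamental group is the quotient of $\pi_1$ of the capped fiber by the vanishing cycles.
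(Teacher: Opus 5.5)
Your treatment of Lefschetz fibrations and of both fundamental-group claims matches the paper. Lifting the point-pushing isotopy $h_t$ through a connection is an explicit form of the paper's ``parallel transport plus isotopy extension.'' Your $\pi_1$ argument also follows the paper step for step: the normal closure $N$ of the vanishing cycles is invariant under the group generated by the $t_{c_j}$, some $c_j$ is nonseparating, and hence $N$ contains every nonseparating curve.

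The gap is the first assertion for a \emph{pencil} on $X$ itself, namely ${\rm im}\,\mu_\pi\subseteq\mathcal E^\infty(X,F)$ for the fiber $F\subset X$ passing through the base points. The paper uses this downstairs form later, e.g.\ for the $F^2=1$ pencil fiber in the remark on pencil examples. You only prove the blown-up statement for $\widetilde F\subset\widetilde X$. Arranging the lifted isotopy to preserve the exceptional sections does not let you blow it down to a diffeomorphism.

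To see why, use the local model $(z_1,z_2)\mapsto[z_1:z_2]$ near a base point. There the fibers are the complex lines through the origin, so the blown-down map sends each line $\ell$ to $h_1(\ell)$. Suppose it had an invertible derivative $L$ at the base point. Then $L$ would be a real-linear map carrying every complex line to a complex line. Such a map is complex linear or antilinear, so $h_1$ would be an (anti-)M\"obius transformation. But $h_1$ is supported near $\delta$, so unless it is the identity it is not of this form. Hence your map descends only to a homeomorphism at the base points. Cutting off the lifted vector field near the base point does not repair this, because the truncated flow no longer returns the fiber over $b$ to itself.

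The paper's fix is to transport only away from small balls about the base points and to extend across these balls using the local model. For instance, build a smooth isotopy of embeddings $i_t\colon F\to X$ that, inside each ball, is given by a smooth path of complex-linear isometries from the line over $b$ to the line over $\delta(t)$. Match it on a collar with a $U(1)$-equivariant transport in the Hopf fibration, and then apply isotopy extension. Only $F$ has to be carried back to itself, so the ambient isotopy need not preserve the pencil structure inside the balls.
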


\begin{proof}
Let $\gamma\colon[0,1]\to S^2\setminus C$ be a loop based at $b$.
Parallel transport of the fiber along $\gamma$ gives an isotopy of embeddings
\[
 i_t\colon F\hookrightarrow X,
 \qquad
 i_t(F)=\pi^{-1}(\gamma(t)).
\]
At the endpoint, $i_1(F)=F$, and $i_1$ differs from $i_0$ by the
monodromy $\mu_\pi(\gamma)$.  By isotopy extension, $i_t$ extends to an
ambient isotopy $\Phi_t$ of $X$, so $\Phi_1$ preserves $F$ and induces
$\mu_\pi(\gamma)$ on it, giving the desired map.

For a Lefschetz pencil, perform parallel transport away from small
balls about the base points.  In the standard local pencil model, the
isotopy extends across these balls.  The same argument also applies to
the proper transforms after blowing up the base points.

For the fundamental group assertion, apply the following argument to
the given Lefschetz fibration, or to the fibration obtained by blowing
up a pencil.  Let $F$ be its regular fiber and let
$N\triangleleft\pi_1(F)$ be the normal subgroup generated by the
vanishing cycles.  The Lefschetz handle description gives
\[
 \pi_1(X\setminus\nu F)\cong\pi_1(F)/N.
\]
Every Dehn twist along a vanishing cycle acts trivially on this quotient, so $N$ is
invariant under the monodromy group.  At least one vanishing cycle is
nonseparating, since otherwise the monodromy would act trivially on
$H_1(F;\Z)$.  If the monodromy is full, $N$ therefore contains every
nonseparating simple closed curve, and hence a standard generating set
for $\pi_1(F)$.  Thus $N=\pi_1(F)$.
\end{proof}

This is the surface-pushing argument used explicitly by Banerjee and Salter \cite[Corollary~1.12]{BanerjeeSalter2025},  see also \cite{Knavel} for a similar $\pi_1$ argument. Pancholi and Presas state a symplectic refinement for the Dehn twist
about a vanishing cycle
\cite[Lemma~4.3]{PancholiPresas2021}; only the smooth statement above
is used here.

We next record that full-monodromy fibrations with the other properties
needed in our construction exist in every genus.

\begin{lem}
\label{lem:flexible-fibers}
For every $g\geq1$, there is a closed simply connected symplectic
$4$--manifold $X_g$ admitting a genus-$g$ Lefschetz fibration
$X_g\to S^2$ with a section and full monodromy.  Its regular fiber
$F_0$ is smoothly flexible and satisfies
\[
 F_0^2=0,\qquad
 \pi_1(X_g\setminus\nu F_0)=1,
 \qquad
 f_{X_g,F_0}\neq0.
\]
\end{lem}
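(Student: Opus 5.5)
The plan is to build $X_g$ as a Lefschetz fibration whose monodromy factorization is made of Dehn twists about curves that generate $\M(\Sigma_g)$. Then Proposition~\ref{prop:full-monodromy} supplies flexibility and the complement condition, and the nonvanishing of the invariant comes from the symplectic structure. Concretely, I would take the Humphries generators $c_1,\ldots,c_{2g+1}$, i.e.\ the chain $c_1,\ldots,c_{2g}$ together with the extra curve meeting $c_4$ (with the obvious modification for $g=1,2$). I would then use a known positive relator containing all of them, for instance the chain relation $(t_{c_1}\cdots t_{c_{2g+1}})^{2g+2}=1$ or $(t_{c_1}\cdots t_{c_{2g}})^{4g+2}=1$, to which one adds Hurwitz-conjugated copies as needed so that every Humphries curve appears. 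Simplest of all: take a relator $W$ containing the chain curves, conjugate it by a mapping class sending some $c_j$ to the remaining Humphries curve, and fiber sum $W\cdot W^{h}$. The image of the monodromy then contains all Humphries twists, so the monodromy is full.

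Next I would secure a section. For $g\geq1$ the chain relation lifts to $\M(\Sigma_g^1)$ as the boundary multitwist relation $(t_{c_1}\cdots t_{c_{2g}})^{4g+2}=t_\delta$ (resp.\ $(\cdots)^{2g+2}=t_\delta^{2}$ for the odd chain of length $2g+1$). Products of such relations lifted to $\Sigma_g^1$ give a relation equal to a power of $t_\delta$, and this yields a section of self-intersection $-k$. Conjugation by $h$ can also be chosen in $\M(\Sigma_g^1)$, so the section survives the fiber sum. For $g=1$ one could instead just take $E(n)$, i.e.\ $(t_at_b)^{6n}=t_\delta^n$; here $a,b$ generate $\mathrm{SL}(2,\Z)$, and all the claims are classical there.

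Then I would verify the listed properties. Gompf--Thurston makes $X_g$ symplectic, with $F_0$ symplectic of square $0$. Simple connectivity of $X_g$ follows from $\pi_1(X_g)\cong\pi_1(F)/N$ modulo the section, and $N=\pi_1(F)$ by the argument in Proposition~\ref{prop:full-monodromy}; the same proposition gives $\pi_1(X_g\setminus\nu F_0)=1$ and smooth flexibility of $F_0$.

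For $f_{X_g,F_0}\neq0$, I would use that $F_0$ is a symplectic surface of square zero in a symplectic manifold (with $b^+>1$, which holds after enough fiber summing, since the signature is negative and $b_2$ grows). Taubes' nonvanishing for the canonical class, combined with the Fintushel--Stern gluing, shows the relative invariant restricted to the canonical class is $\pm1$. Equivalently, since $X_g=(X_g\setminus\nu F_0)\cup F\times D^2$ and the absolute invariant of $X_g$ is obtained from $f_{X_g,F_0}$ by pairing with the invariant of $F\times D^2$, nonvanishing of $\SW_{X_g}(K)$ forces $f_{X_g,F_0}\neq0$.

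The main obstacle I expect is exhibiting, uniformly in $g$, a positive factorization that simultaneously has full monodromy and lifts with a section. I would first try the fiber sum of two conjugated chain-relation fibrations as above, checking carefully that the lift to $\Sigma_g^1$ is compatible. The gluing step for nonvanishing of the relative invariant is standard but requires matching conventions with \cite{FintushelSternAddendum}.
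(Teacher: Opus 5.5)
Your proposal follows the paper's proof in all essentials. For $g=1$ the paper uses $E(2)$. For $g\geq2$ it takes the twisted fiber sum $W_g(\psi W_g\psi^{-1})=1$ of two copies of the hyperelliptic fibration $h_g^2=1$ on $\CP\#(4g+5)\CPb$, choosing $\psi$ to send $c_1$ to the extra Humphries curve and to match the section points. It then invokes Proposition~\ref{prop:full-monodromy}, and cites Fintushel--Stern for nonvanishing of the relative invariant of a symplectic fiber, which your Taubes-plus-gluing sketch unpacks.

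Two slips do not affect the argument. First, the odd chain relation lifts to $\M(\Sigma_g^1)$ as $t_\delta$, not $t_\delta^2$. Second, $b^+>1$ follows from $b^+(X_1\#_F X_2)=b^+(X_1)+b^+(X_2)+2g-1$ here, not from negativity of the signature.
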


\begin{proof}
For $g=1$, take the standard elliptic fibration on $E(2)$, whose
monodromy factorization is
\[
 (t_at_b)^{12}=1.
\]
The twists $t_a,t_b$ generate $\M(T^2)$, so the fibration has full
monodromy.

Now let $g\geq2$.  Let
\[
 Y_g=\CP\#(4g+5)\CPb
\]
and consider the hyperelliptic genus-$g$ Lefschetz fibration with
monodromy factorization
\[
 W_g:=h_g^2=1,
 \qquad
 h_g=
 t_{c_1}\cdots t_{c_{2g}}
 t_{c_{2g+1}}^2
 t_{c_{2g}}\cdots t_{c_1}.
\]
Let $b$ be the extra nonseparating curve for which
$c_1,\ldots,c_{2g},b$ form a Humphries generating system
\cite{humphries:generators}, and choose
$\psi\in\M(\Sigma_g)$ with $\psi(c_1)=b$.  Form the twisted fiber sum
of two copies of this fibration using $\psi$, as in
\cite[Section~3]{baykur-korkmaz-simone:geography}.  Its monodromy
factorization is
\[
 W_g\bigl(\psi W_g\psi^{-1}\bigr)=1.
 \tag{3.1}
\]
The first factor contains $t_{c_1},\ldots,t_{c_{2g}}$, while the
second contains
\[
 \psi t_{c_1}\psi^{-1}=t_\psi(c_1)=t_b.
\]
Hence the resulting fibration has full monodromy.

Choose a representative of $\psi$ carrying the intersection point of
one section with the fiber to that of the other; the punctured sections
then glue to a section.  By
Proposition~\ref{prop:full-monodromy}, its regular fiber $F_0$ is
smoothly flexible and
\[
 \pi_1(X_g\setminus\nu F_0)=1.
\]
In particular, $X_g$ is simply connected.  The fiber sum is
symplectic, and $F_0$ is a symplectic, primitively embedded surface.
Thus $(X_g,F_0)$ has nonzero relative Seiberg--Witten invariant
\cite[Theorem~1.1]{FintushelStern1997}
\cite{FintushelSternAddendum}.
\end{proof}

\begin{remark}[Pencil examples]
There are also many examples of Lefschetz pencils with full monodromy.

\smallskip
\noindent\textup{(1)}
For every $g\geq2$, one can derive a full-monodromy Lefschetz pencil
with one base point from the $2g$--chain relation.  The left-hand side
of this relation is Hurwitz equivalent to
\[
 (t_{c_1}\cdots t_{c_{2g}})^{4g+2}
 \sim
 (t_{c_1}t_{c_2}t_{c_3})^4
 (t_{c_1}\cdots t_{c_{2g}})^{2g+1}U_g
\]
for some positive word $U_g$.  Applying the $3$--chain relation gives
\[
 t_\delta
 =
 t_{d_1}t_{d_2}
 (t_{c_1}\cdots t_{c_{2g}})^{2g+1}U_g,
\]
where, in the standard configuration, one of $d_1,d_2$ is the extra
curve in a Humphries generating system
\cite{humphries:generators}.  The capped monodromy is therefore all of
$\M(\Sigma_g)$.

This positive factorization of $t_\delta$ determines a Lefschetz pencil
with one base point.  Its regular fiber $F$ has $F^2=1$ and is smoothly
flexible.  Moreover, $X\setminus\nu F$ is a Lefschetz fibration over a
disk with fiber $\Sigma_g^1$, and the chain vanishing cycles normally
generate $\pi_1(\Sigma_g^1)$.  Hence
\[
 \pi_1(X\setminus\nu F)=1.
\]
The fiber is symplectic and primitive, so $(X,F)$ has nonzero relative
Seiberg--Witten invariant.

\smallskip
\noindent\textup{(2)}
Banerjee and Salter provide many complex-algebraic examples.  They show
that sufficiently ample pencils on simply connected projective
surfaces have monodromy equal to the $r$--spin mapping class group
determined by a maximal root of the adjoint bundle
\cite[Corollary~1.8]{BanerjeeSalter2025}.  When $r=1$, the monodromy is
full and the fiber is smoothly flexible
\cite[Corollary~1.12]{BanerjeeSalter2025}.  After blowing up the base
points, Proposition~\ref{prop:full-monodromy} gives a square-zero
flexible fiber with simply connected complement.  These yield many
further complex initial pairs, though not one in every prescribed
genus.
\end{remark}

\subsection{Successive symmetry bounds}

For a relative invariant $f_{X,F}$, let
$\mathcal N_R(f_{X,F})$ denote its rim Newton profile, as in
Section~\ref{sec:rigid}.  Flexibility has a particularly strong
consequence for the initial profile.

\begin{prop}[Rim Newton obstruction to smooth flexibility]
\label{prop:flexible-profile}
Let $F\subset X$ be smoothly flexible and suppose that
$f_{X,F}\neq0$.  Every polytope in the rim Newton profile
$\mathcal N_R(f_{X,F})$ is a point.
\end{prop}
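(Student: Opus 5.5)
Since $F$ is smoothly flexible, every $A\in{\rm Sp}(H_1(F;\Z))$ is induced by some $\Phi\in{\rm Diff}^+(X)$ preserving $F$. This holds because $\M(F)\to{\rm Sp}$ is surjective and $\mathcal E^\infty(X,F)=\M(F)$. The plan is to show that if some polytope in the profile had positive dimension, then its orbit under ${\rm Sp}(2g,\Z)$ would be infinite. That would contradict the finiteness of $\mathcal N_R(f_{X,F})$ together with Lemma~\ref{lem:rim-Newton-profile}, which says every such $A$ permutes the finite multiset of translation classes.

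Concretely, I would suppose that some translation class $[P]$ in the profile has $\dim P\geq 1$. Then $P$ contains two distinct lattice points, so its difference body $\Delta(P)=P-P$ (translation invariant, so well defined on classes) contains a nonzero vector $w\in 2R\subset R\otimes\R$, and of course $\Delta(P)$ is compact. The key step is to exhibit symplectic automorphisms that stretch $w$ arbitrarily. Since $\omega$ is nondegenerate, pick a primitive $u\in R$ with $\omega(u,w)\neq0$, and take the symplectic transvection $T_u^k(x)=x+k\,\omega(u,x)\,u$. This is realized by the $k$-th power of a Dehn twist along a simple closed curve representing $u$, and it is integral and symplectic. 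Then
\[
 T_u^k(w)=w+k\,\omega(u,w)\,u,
\]
whose norm tends to infinity with $k$.

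By Lemma~\ref{lem:rim-Newton-profile}, each $T_u^k$ sends $[P]$ to some class in the finite profile. Hence the difference bodies $T_u^k(\Delta(P))=\Delta(T_u^k P)$ range over a finite set of compact sets, namely the difference bodies of the finitely many profile classes. These are uniformly bounded, contradicting the unbounded vectors $T_u^k(w)$. Therefore every profile polytope is a point.

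I expect no serious obstacle. The only point needing care is to confirm that the induced action on the rim lattice, under the identification $\mathcal R_F\cong H_1(F;\Z)$, is exactly $\phi_*$, so that all of ${\rm Sp}$ is realized. This is recorded in Proposition~\ref{prop:FS-rim-formula}. For $F^2>0$ one would pass to the proper transform as in the proof of Theorem~\ref{thm:projective-rigidity-detailed}, though the statement as used concerns the square-zero setting.
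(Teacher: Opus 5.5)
Your argument is correct and is essentially the paper's proof. Both arguments use that smooth flexibility makes finite support data invariant under all of ${\rm Sp}(H_1(F;\Z))$, and that iterated transvections send any nonzero vector of $2R$ off to infinity. The only cosmetic difference is the object the transvections act on: the paper takes the finite invariant set $\mathcal D\subset 2R$ of pairwise differences of support exponents and concludes $\mathcal D=\{0\}$, whereas you take the uniformly bounded difference bodies of the finitely many profile polytopes.
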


\begin{proof}
Set
\[
R:=\mathcal R_F\cong H_1(F;\Z).
\]
For the polynomials $f_\lambda$ used in the definition of the rim
Newton profile, set
\[
\mathcal D
=
\bigcup_{\lambda}
\left\{
u-u'
\ \middle|\
u,u'\in{\rm supp}(f_\lambda)
\right\}
\subset 2R,
\]
where $\lambda$ ranges over the $2R$--orbits for which
$f_\lambda\neq0$. This is a finite set, and it is independent of the
chosen origins $\alpha_\lambda$, since translating a support does not
change its pairwise differences.

Naturality of the relative invariant shows that $\mathcal D$ is
invariant under the action on $R$ of
$\mathcal E^\infty(X,F)$. Since $F$ is smoothly flexible and
\[
\M(F)\rightarrow
\rm{Sp}\bigl(H_1(F;\Z)\bigr)
\]
is surjective, $\mathcal D$ is invariant under the full symplectic
group.

Every nonzero element $u\in H_1(F;\Z)$ has an infinite symplectic
orbit. Indeed, choose $w$ with $u\cdot w\neq0$. The symplectic
transvection
\[
T_w(x)=x+(x\cdot w)w
\]
satisfies
\[
T_w^k(u)=u+k(u\cdot w)w,
\qquad k\in\Z,
\]
and these elements are pairwise distinct. Since $\mathcal D$ is
finite, it follows that $\mathcal D=\{0\}$.

Thus each ${\rm supp}(f_\lambda)$ consists of a single exponent, so
each ${\rm Newt}(f_\lambda)$, and hence every polytope in
$\mathcal N_R(f_{X,F})$, is a point.
\end{proof}

\begin{remark}
\label{rmk:flexible-profile-general}
The proof uses smooth flexibility only through the size of the
homological image
\[
\Gamma_{X,F}:=
{\rm im}\left(
\mathcal E^\infty(X,F)
\rightarrow
\rm{Sp}\bigl(H_1(F;\Z)\bigr)
\right).
\]
The same conclusion holds whenever every nonzero class in
$H_1(F;\Z)$ has an infinite $\Gamma_{X,F}$ orbit; in particular, it
holds when $\Gamma_{X,F}$ has finite index in the full symplectic
group. Thus the presence of a positive-dimensional polytope in the
rim Newton profile obstructs smooth flexibility.
\end{remark}

Let $v_1,\ldots,v_{2g}$ and $d_1,\ldots,d_{2g}$ be the basis and
pairwise distinct positive integers from
Lemma~\ref{lem:rigid-zonotope}.  Put $P:=\rm{PSp}(2g,\Z)$ and, for $j=1,\ldots,2g$, let
$\ell_j:=\Z v_j$.  Set
\[
 Z_i
 =
 \sum_{j=1}^{i}[-d_jv_j,d_jv_j],
 \qquad
 P_i
 =
 \bigcap_{j=1}^{i}\rm{Stab}_{P}(\ell_j),
\]
with $P_0=P$.

\begin{lem}
\label{lem:projective-filtration}
The stabilizer of $Z_i$ in $P$ is $P_i$, and
\[
 P
 =
 P_0
 \supsetneq
 P_1
 \supsetneq
 \cdots
 \supsetneq
 P_{2g}
 =
 \{1\}.
\]
\end{lem}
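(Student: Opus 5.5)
We prove the two assertions separately. The stabilizer computation follows the edge-length argument of Lemma~\ref{lem:rigid-zonotope}, applied inside a subspace. Strictness of the chain comes from explicit symplectic transvections.

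First note that the action of $P$ on polytopes is well defined on $Z_i$. Each $Z_i$ is centrally symmetric, so $A$ and $-A$ have the same effect on it. Likewise, the condition ``$A$ preserves the line $\ell_j$'' does not depend on the sign of $A$.

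\emph{Stabilizer of $Z_i$.} Let $W_i=\R v_1\oplus\cdots\oplus\R v_i$. The linear map sending the standard basis of $\R^i$ to $v_1,\ldots,v_i$ carries the box $\prod_{j\le i}[-d_j,d_j]$ onto $Z_i$. Hence $Z_i$ is an $i$--dimensional parallelepiped spanning $W_i$, and its edges are parallel to the lines $\R v_j$ for $j\le i$. Since $v_j$ is primitive in $R$, an edge parallel to $\R v_j$ has lattice length $2d_j$.

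Suppose $A\in{\rm Sp}(2g,\Z)$ satisfies $A(Z_i)=Z_i$. Then $A$ preserves $W_i$, the affine span of $Z_i$. It also maps edges of $Z_i$ to edges of $Z_i$, preserving lattice lengths, because $A$ preserves the lattice $R$. The $d_j$ are pairwise distinct, so $A(\R v_j)=\R v_j$ for every $j\le i$. By primitivity this gives $A v_j=\pm v_j$, so $[A]\in P_i$.

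Conversely, if $A v_j=\varepsilon_j v_j$ with $\varepsilon_j=\pm1$ for all $j\le i$, then $A$ maps each segment $[-d_jv_j,d_jv_j]$ to itself, since the segment is symmetric. Minkowski sums are preserved by linear maps, so $A(Z_i)=Z_i$.

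For $i=2g$ we have $Z_{2g}=Z$, and Lemma~\ref{lem:rigid-zonotope} gives ${\rm Stab}_{{\rm Sp}}(Z)=\{\pm I\}$. Therefore $P_{2g}=\{1\}$. For $i=0$ the condition is empty, so $P_0=P$.

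\emph{Strict inclusions.} The inclusions $P_{i-1}\supseteq P_i$ hold by definition. For strictness, fix $1\le i\le 2g$. The intersection pairing on $R$ is unimodular and $\{v_j\}$ is a basis, so there is a dual vector $w\in R$ with $w\cdot v_j=\delta_{ij}$ for all $j$. Consider the symplectic transvection
\[
 T_w(x)=x+(x\cdot w)w,
\]
which lies in ${\rm Sp}(2g,\Z)$.

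For $j<i$ we have $v_j\cdot w=0$, so $T_w(v_j)=v_j$ and hence $[T_w]\in P_{i-1}$. On the other hand,
\[
 T_w(v_i)=v_i+(v_i\cdot w)\,w=v_i-w .
\]
If $v_i-w=kv_i$ for some integer $k$, then $w=(1-k)v_i$. This would give $w\cdot v_i=0$, contradicting $w\cdot v_i=1$. So $T_w$ does not preserve $\ell_i$, and $[T_w]\in P_{i-1}\setminus P_i$.

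The only point needing care is the edge-length argument when $Z_i$ is not full dimensional. This works because $A$ preserves the span $W_i$ and the ambient lattice $R$, so lattice lengths of edges, measured with the primitive vectors $v_j\in R$, are intrinsic to the pair $(Z_i,R)$. The remaining steps are formal.
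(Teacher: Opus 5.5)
Your proof is correct and follows essentially the same route as the paper. The lattice lengths $2d_j$ of the edges of $Z_i$ identify its stabilizer with $P_i$, Lemma~\ref{lem:rigid-zonotope} gives $P_{2g}=\{1\}$, and a symplectic transvection along a unimodular dual vector to $v_i$ witnesses each strict inclusion. Your convention $w\cdot v_j=\delta_{ij}$ gives $T_w(v_i)=v_i-w$ rather than the paper's $v_i+w_i$; this difference is immaterial.
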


\begin{proof}
Since the $v_j$ are linearly independent, $Z_i$ is an
$i$--dimensional parallelotope.  Its edge directions are
$\ell_j\otimes\R=\R v_j$, and their lattice lengths are the pairwise
distinct numbers $2d_j$.  Thus an integral automorphism preserving
$Z_i$ must preserve every $\ell_j$, for $j\leq i$.  Conversely, an
integral automorphism preserving these lines sends each primitive
vector $v_j$ to $\pm v_j$, and hence preserves $Z_i$.  Therefore the
stabilizer of $Z_i$ in $P$ is $P_i$.

To see that each inclusion is strict, set
\[
 V=H_1(\Sigma_g;\Z),
 \qquad
 U_{i-1}=\langle v_1,\ldots,v_{i-1}\rangle_{\Z}.
\]
Since $v_1,\ldots,v_{2g}$ form an integral basis, there is an integral
linear functional $\lambda_i\colon V\to\Z$ satisfying
\[
 \lambda_i(v_j)=\delta_{ij}.
\]
The intersection form on $V$ is unimodular, so the map
\[
 V\rightarrow{\rm Hom}(V,\Z),
 \qquad
 w\longmapsto (x\mapsto x\cdot w),
\]
is an isomorphism.  Hence there is an integral class $w_i\in V$ such
that
\[
 v_j\cdot w_i=0\quad(j<i),
 \qquad
 v_i\cdot w_i=1.
\]
In particular, $w_i\in U_{i-1}^{\perp}$ but is not proportional to
$v_i$.  The symplectic transvection
\[
 T_{w_i}(x)=x+(x\cdot w_i)w_i
\]
fixes $\ell_1,\ldots,\ell_{i-1}$, whereas
\[
 T_{w_i}(v_i)=v_i+w_i\notin\ell_i.
\]
Thus the projective class of $T_{w_i}$ lies in
$P_{i-1}\setminus P_i$, proving
\[
 P_{i-1}\supsetneq P_i.
\]

Finally, $Z_{2g}=Z$, and
Lemma~\ref{lem:rigid-zonotope} says that its stabilizer in
$\rm{Sp}(2g,\Z)$ is $\{\pm I\}$.  Its stabilizer in
$P=\rm{PSp}(2g,\Z)$ is therefore trivial, so
$P_{2g}=\{1\}$.
\end{proof}

The preceding lemma gives the desired projective symmetry bounds.  We
now realize them by successive rim surgeries.

\begin{thm}
\label{thm:filtration-detailed}
Let $(X_g,F_0)$ be the pair in
Lemma~\ref{lem:flexible-fibers}, and let
$v_i,d_i$ and $P_i$ be as above.  There are oriented simple closed
curves $\gamma_i\subset F_0$ representing $v_i$ such that, using the
natural identifications of the abstract surfaces after each surgery
and setting
\[
 K_i=T(2,2d_i+1),
 \qquad
 F_i=(F_{i-1})_{K_i,\gamma_i},
 \qquad i=1,\ldots,2g,
\]
where $K_i$ is the $(2,2d_i+1)$--torus knot, the surfaces $F_i$ are
mutually topologically isotopic and topologically flexible, the pairs
$(X_g,F_i)$ are pairwise nondiffeomorphic, and
\[
 \rho\bigl(\mathcal E^\infty(X_g,F_i)\bigr)\subseteq P_i
\]
for every $i=0,\ldots,2g$.
\end{thm}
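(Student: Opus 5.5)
The plan is to follow the proof of Theorem~\ref{thm:projective-rigidity-detailed}, with one change: Proposition~\ref{prop:flexible-profile} supplies much finer information about the initial profile. Because $F_0$ is smoothly flexible (Lemma~\ref{lem:flexible-fibers}) and $f_{X_g,F_0}\neq0$, every polytope in $\mathcal N_R(f_{X_g,F_0})$ is a point. So there is no need to take $m$ large; $m=1$ works, which is why $K_i=T(2,2d_i+1)$ suffices.

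Choose $\gamma_i$ as nonseparating simple closed curves representing $v_i$. Iterating Proposition~\ref{prop:FS-rim-formula} via (2.2), and using the Alexander polynomial computation from the proof of Theorem~\ref{thm:projective-rigidity-detailed}, gives
\[
 f_{X_g,F_i}\doteq f_{X_g,F_0}\prod_{j=1}^{i}\Delta_{K_j}(r_{v_j}^2),
 \qquad
 {\rm Newt}\Bigl(\prod_{j\le i}\Delta_{K_j}(r_{v_j}^2)\Bigr)=2Z_i .
\]
Lemma~\ref{lem:rim-Newton-profile} then shows that $\mathcal N_R(f_{X_g,F_i})$ is a nonempty multiset all of whose entries equal the single translation class $[2Z_i]$. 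The argument proceeds in three steps.

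\textbf{Symmetry bound.} For $\phi\in\mathcal E^\infty(X_g,F_i)$, the induced $A=\phi_*$ acts on the rim lattice. By Lemma~\ref{lem:rim-Newton-profile}, $A$ maps $[2Z_i]$ to itself, so $A(2Z_i)$ is a translate of $2Z_i$. Both $2Z_i$ and $A(2Z_i)$ are centrally symmetric about $0$, and a translate of a centrally symmetric body that is again symmetric about $0$ must be the trivial translate. Hence $A(Z_i)=Z_i$, and Lemma~\ref{lem:projective-filtration} gives $\rho(\phi)\in P_i$. The case $i=0$ holds trivially.

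\textbf{Pairwise nondiffeomorphic.} A diffeomorphism $(X_g,F_i)\to(X_g,F_k)$ induces a lattice isomorphism carrying $[2Z_i]$ to $[2Z_k]$. This is impossible for $i\neq k$, since $Z_i$ has dimension $i$ and linear isomorphisms preserve dimension. The lattice-point count from before would also work here.

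\textbf{Topological statements.} Lemma~\ref{lem:rim-topology} applies because $\pi_1(X_g\setminus\nu F_0)=1$. It shows that all $F_i$ are topologically isotopic to $F_0$, with $\mathcal E^0(X_g,F_i)=\mathcal E^0(X_g,F_0)$. Since $F_0$ is smoothly flexible, $\mathcal E^0(X_g,F_0)\supseteq\mathcal E^\infty(X_g,F_0)=\M(F_0)$, so every $F_i$ is topologically flexible.

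\textbf{Main obstacle.} The delicate point is the bookkeeping of the natural identifications. The curves $\gamma_i$ live on successive surgered surfaces, and the rim lattices $\mathcal R_{F_i}$ must be identified compatibly with $H_1(F_0)$, so that $\phi_*$ on $H_1(F_i)$ really is the action on the common lattice where $Z_i$ sits. I would handle this by performing each rim surgery in a small neighborhood of $\gamma_i$ disjoint from the earlier surgery regions, taken parallel off the earlier curves. The identifications of Proposition~\ref{prop:FS-rim-formula} are then the identity away from those regions, and composing them is straightforward.
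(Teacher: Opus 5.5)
Your proposal is correct and follows the paper's proof essentially step for step. Proposition~\ref{prop:flexible-profile} reduces the initial profile to points, so $m=1$ (i.e.\ $K_i=T(2,2d_i+1)$) suffices. The profile of $F_i$ is then a multiset of copies of $[2Z_i]$, and central symmetry kills the translation to give $\phi_*(Z_i)=Z_i$, hence $\rho(\phi)\in P_i$ by Lemma~\ref{lem:projective-filtration}. Affine dimension distinguishes the pairs, and Lemma~\ref{lem:rim-topology} together with smooth flexibility of $F_0$ gives the topological claims.

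One small correction to your bookkeeping remark. Since $v_i\cdot v_{i+1}\neq0$, consecutive curves $\gamma_i,\gamma_{i+1}$ cannot be taken parallel or disjoint on the surface. Any disjointness must therefore be arranged for the rim tori, pushed to different radii in the normal collar, rather than for neighborhoods of the curves. The paper avoids this issue altogether by invoking the natural identifications of Proposition~\ref{prop:FS-rim-formula} and (2.2).
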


\begin{proof}
Set $R=\mathcal R_{F_0}$.  By
Proposition~\ref{prop:flexible-profile}, every polytope in
$\mathcal N_R(f_{X_g,F_0})$ is a point.  For the torus knot
\[
 K_i=T(2,2d_i+1),
\]
the symmetric Alexander polynomial has extremal exponents $\pm d_i$,
and hence
\[
 {\rm Newt}\bigl(\Delta_{K_i}(r_{v_i}^2)\bigr)
 =
 [-2d_iv_i,2d_iv_i].
\]
It follows from the product rule that
\[
\begin{aligned}
 {\rm Newt}\left(
 \prod_{j=1}^i\Delta_{K_j}(r_{v_j}^2)
 \right)
 &=
 \sum_{j=1}^i[-2d_jv_j,2d_jv_j]\\
 &=
 2Z_i.
\end{aligned}
\]
The iterated rim-surgery formula and
Lemma~\ref{lem:rim-Newton-profile} therefore show that every polytope
in $\mathcal N_R(f_{X_g,F_i})$ is translation equivalent to $2Z_i$.
Equivalently,
\[
 [P]=[2Z_i]
 \qquad
 \text{for every }[P]\in\mathcal N_R(f_{X_g,F_i}).
 \tag{3.3}
\]
The number of occurrences of $[2Z_i]$ is the number of orbits
$\lambda\in\mathcal A_{F_0}/(2R)$ for which $f_\lambda\neq0$, and is
independent of $i$.

Let $\phi\in\mathcal E^\infty(X_g,F_i)$.  Naturality of the relative
invariant implies that $\phi_*$ preserves the translation class of
$2Z_i$.  Thus $\phi_*(2Z_i)$ is a translate of $2Z_i$.  Since both
$\phi_*(2Z_i)$ and $2Z_i$ are centrally symmetric about the origin,
this translation is zero, and hence
$
 \phi_*(Z_i)=Z_i.
$
Lemma~\ref{lem:projective-filtration} now gives
$
 \rho(\phi)\in P_i.
$
This proves the asserted symmetry bounds.

By construction $Z_i$
is generated by the $i$ linearly independent directions
$v_1,\ldots,v_i$, and hence has affine dimension $i$.  Thus every
polytope in Equation~(3.3), being translation-equivalent to $2Z_i$,
also has affine dimension $i$.  Diffeomorphisms of pairs
preserve the rim Newton profile and the affine dimensions of its
polytopes.  The pairs $(X_g,F_i)$ are therefore pairwise
nondiffeomorphic.

Finally, ordinary rim surgery preserves simple connectivity of the
complement.  Lemma~\ref{lem:rim-topology} shows that all the $F_i$ are
topologically isotopic to $F_0$.  Since $F_0$ is smoothly flexible, it
is topologically flexible, and
Lemma~\ref{lem:extendable-conjugacy} shows that every $F_i$ is
topologically flexible.
\end{proof}

Together with Lemma~\ref{lem:projective-filtration}, this proves
Theorem~\ref{thm:b}.

\begin{example}
\label{ex:g1-rim-newton}
We illustrate the polynomials $f_\lambda$, their Newton polytopes,
and the symmetry bounds in genus $1$.  Let $F=F_0\subset E(2)$ be a
regular fiber of the standard elliptic fibration, and choose an
oriented basis
\[
 a,b\in H_1(F;\Z),
\]
so that
\[
 R=\mathcal R_F\cong H_1(F;\Z)=\Z a\oplus\Z b.
\]
Since $F$ is smoothly flexible and $f_{E(2),F}\neq0$,
Proposition~\ref{prop:flexible-profile} shows that every polytope in its rim
Newton profile is a point.  Fix an orbit $\lambda$ with
$f_\lambda\neq0$.  After choosing its affine origin, we may write
\[
 f_\lambda=c_\lambda
\]
for some $c_\lambda\in\Z\setminus\{0\}$.

\begin{figure}[t]
\centering
\begin{tikzpicture}[scale=0.98]
\tikzset{
  support/.style={
    fill=blue!75!black,
    draw=blue!75!black,
    line width=0.5pt
  },
  segment/.style={
    draw=blue!75!black,
    line width=1.45pt
  },
  hull/.style={
    draw=blue!75!black,
    fill=blue!22,
    line width=1.25pt
  },
  axis/.style={<->,thin},
  lab/.style={font=\small},
  title/.style={font=\small},
  stab/.style={font=\small}
}

\begin{scope}[xshift=0cm]
  \draw[axis] (-2.35,0) -- (2.55,0)
    node[right,lab] {$2a$};
  \draw[axis] (0,-2.65) -- (0,2.75)
    node[above,lab] {$2b$};

  \node[title,anchor=west] at (-2.20,2.45) {$F=F_0$};

  \filldraw[support] (0,0) circle (2.4pt);

  \node[stab] at (0,-3.10) {$P_0=\rm{PSp}(2,\Z)$};
\end{scope}

\begin{scope}[xshift=5.70cm]
  \draw[axis] (-2.35,0) -- (2.55,0)
    node[right,lab] {$2a$};
  \draw[axis] (0,-2.65) -- (0,2.75)
    node[above,lab] {$2b$};

  \node[title,anchor=west] at (-2.20,2.45) {$F_1$};

  \draw[segment] (-1,0) -- (1,0);
  \filldraw[support] (-1,0) circle (2.2pt);
  \filldraw[support] (0,0) circle (2.2pt);
  \filldraw[support] (1,0) circle (2.2pt);

  \node[stab] at (0,-3.10)
    {$P_1=\rm{Stab}_P(\Z a)$};
\end{scope}

\begin{scope}[xshift=11.40cm]
  \filldraw[hull] (-1,-2) rectangle (1,2);

  \draw[axis] (-2.35,0) -- (2.55,0)
    node[right,lab] {$2a$};
  \draw[axis] (0,-2.65) -- (0,2.75)
    node[above,lab] {$2b$};

  \node[title,anchor=west] at (-2.20,2.45) {$F'=F_2$};

  \foreach \x in {-1,0,1}
    \foreach \y in {-2,-1,0,1,2}
      \filldraw[support] (\x,\y) circle (1.95pt);

  \node[stab] at (0,-3.10) {$P_2=\{1\}$};
\end{scope}

\end{tikzpicture}

\caption{Supports (blue points) and their Newton polytopes for the orbit polynomials associated to $(E(2),F_i)$, $i=0,1,2$.  The labels below record their stabilizers in $\rm{PSp}(2,\Z)$.}
\label{fig:g1-rim-newton}
\end{figure}

Perform rim surgery first along a curve representing $a$ using
$T(2,3)$, and then along a curve representing $b$ using $T(2,5)$.
Denote the resulting surfaces by $F_1$ and $F'=F_2$.  Since
\[
 \Delta_{T(2,3)}(t)=t^{-1}-1+t,
 \qquad
 \Delta_{T(2,5)}(t)=t^{-2}-t^{-1}+1-t+t^2,
\]
the corresponding orbit polynomials satisfy
\[
 f^{(1)}_\lambda
 \doteq
 c_\lambda(e^{-2a}-1+e^{2a})
\]
and
\[
 f^{(2)}_\lambda
 \doteq
 c_\lambda
 (e^{-2a}-1+e^{2a})
 (e^{-4b}-e^{-2b}+1-e^{2b}+e^{4b}).
\]
Consequently,
\[
 {\rm Newt}(f_\lambda)=\{0\},
 \qquad
 {\rm Newt}(f^{(1)}_\lambda)=[-2a,2a],
\]
and
\[
 {\rm Newt}(f^{(2)}_\lambda)
 =
 [-2a,2a]+[-4b,4b]
 =
 2Z,
 \qquad
 Z=[-a,a]+[-2b,2b].
\]

Set $P=\rm{PSp}(2,\Z)$.  The point $\{0\}$ is preserved by all of
$P$, whereas the segment $[-2a,2a]$ has stabilizer
\[
 P_1=\rm{Stab}_P(\Z a).
\]
For the final rectangle, the unequal weights prevent its two edge
directions from being exchanged.  Any symplectic lattice
automorphism preserving it must therefore preserve both $\Z a$ and
$\Z b$, and hence is $\pm I$ before passing to $P$.  Thus
\[
 P=P_0
 \supsetneq
 P_1=\rm{Stab}_P(\Z a)
 \supsetneq
 P_2=\{1\}.
\]
Since the three polytopes are centrally symmetric about the origin,
the same statements hold for their translation classes.  This is the
genus $1$ model for the symmetry bounds in
Theorem~\ref{thm:filtration-detailed}.

\end{example}

\begin{remark} \label{rk:top-flexible}
In a recent preprint, 
Pyronneau obtains an extension theorem for surfaces
with boundary properly embedded in simply connected $4$--manifolds with boundary $S^3$ \cite[Theorem~4.3]{Pyronneau2026}. Puncturing a closed pair in a
standard ball pair gives the corresponding closed statement: every
closed ordinary oriented surface with simply connected complement is
topologically flexible.
\end{remark}

\begin{remark}
\label{rmk:plane-curves}
Let $C_d\subset\CP$ be a nonsingular plane curve of degree $d$, and set
\[
 g_d=\frac{(d-1)(d-2)}2.
\]
Hirose proved that $C_3$ and $C_4$ are smoothly flexible
\cite[Theorem~4.2]{Hirose2005}.  In \cite{LehmanLewis}, Lehman and
Lewis show that, for $d\geq5$,
\[
 \mathcal E^0(\CP,C_d)
 =
 \begin{cases}
 \M(C_d), & d\text{ even},\\
 \M(C_d,q_d), & d\text{ odd},
 \end{cases}
\]
where $q_d$ is the Rokhlin quadratic form and $\M(C_d,q_d)$ is the
corresponding spin mapping class group.

Here
\[
 \pi_1(\CP\setminus\nu C_d)\cong\Z_d.
\]
The smooth extendable group contains the plane-curve monodromy group,
which has finite index in $\M(C_d)$ for $d\geq5$
\cite{Salter2019,Salter2025}.  Hence Remark~\ref{rmk:flexible-profile-general} shows that every
polytope in the initial rim Newton profile is a point. Using the $1$--twist version of
Remark~\ref{rmk:cyclic-rim}, the proof of
Theorem~\ref{thm:filtration-detailed} gives, for every $d\geq3$, a
sequence
\[
 C_{d,0}=C_d,C_{d,1},\ldots,C_{d,2g_d}\subset\CP
\]
of topologically isotopic, pairwise nondiffeomorphic surfaces with the
same successive projective symmetry bounds. They are all topologically flexible when $d=3$ or $d$ is even, and their topological extendable group is the finite index subgroup $\M(C_d,q_d)$ when $d\geq5$ is odd.
\end{remark}

\begin{remark}
Rigidity is highly unstable under internal stabilization.
Let $T\subset S^4$ be the standard unknotted torus and let 
\[
 \widehat F_i:=F_i\#T \subset X_g \# S^4 \cong X_g
\]
be a standard internal stabilization.  The argument of
\cite[Theorem~2 and Section~3.1]{BaykurSunukjian2016} may be repeated
while reusing the same trivial handle, so all the $\widehat F_i$ are
smoothly isotopic.

Every mapping class of $F_0$ has a representative fixing the
stabilization disk pointwise.  By smooth flexibility and isotopy
extension, its ambient extension may be chosen to be the identity on
the stabilization ball, and hence extends over $\widehat F_0$ by the
identity on the added handle.  Thus the homological image of
$\mathcal E^\infty(X_g,\widehat F_0)$, and hence that of each
$\mathcal E^\infty(X_g,\widehat F_i)$ after conjugation, contains a
copy of $\rm{Sp}(2g,\Z)$.  In particular, none of the once-stabilized
surfaces is projectively rigid.
\end{remark}

\medskip
\section{A topologically and smoothly rigid surface}
\label{sec:hyperbolic}

We now turn to a complementary source of rigidity from hyperbolic
geometry.  The proof has two parts.  First, Mostow rigidity and the
Dehn--Nielsen--Baer theorem identify the extendable mapping classes of
a totally geodesic surface with restrictions of ambient isometries.
We then use arithmetic embedding results to place a closed hyperbolic
surface with trivial orientation-preserving isometry group inside a
closed hyperbolic $4$--manifold.

The geometric facts used in the first part are standard consequences
of Mostow rigidity, the Dehn--Nielsen--Baer theorem, and the basic
theory of cocompact Fuchsian groups; see
\cite{MostowStrongRigidity,FarbMargalit2012,Katok1992}.  We record them
only in the form needed here.

\begin{proposition}
\label{prop:mostow-reduction}
Let $F\subset X$ be a closed embedded totally geodesic surface in a
closed hyperbolic $4$--manifold, and set
\[
 G_F
 =
 \left\{
 u\in\rm{Isom}^+(X)
 \ \middle|\
 u(F)=F\text{ and }u|_F\text{ preserves orientation}
 \right\}.
\]
Then restriction gives
\[
 \mathcal E^\infty(X,F)
 =
 \mathcal E^0(X,F)
 =
 \rm{im}\bigl(G_F\rightarrow\M(F)\bigr).
\]
In particular, if $\rm{Isom}^+(F)=\{1\}$ for the hyperbolic
metric induced on $F$, then
\[
 \mathcal E^\infty(X,F)=\mathcal E^0(X,F)=\{1\}.
\]
\end{proposition}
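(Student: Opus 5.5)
The plan is to prove the chain of inclusions
\[
 \rm{im}\bigl(G_F\to\M(F)\bigr)\subseteq\mathcal E^\infty(X,F)\subseteq\mathcal E^0(X,F)\subseteq\rm{im}\bigl(G_F\to\M(F)\bigr).
\]
The first inclusion is immediate, since isometries are diffeomorphisms. The second is the general inclusion recorded in the introduction. All the content lies in the last inclusion.

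So let $\Phi\in\rm{Homeo}^+(X)$ with $\Phi(F)=F$ and $\Phi|_F=\phi$ orientation preserving on $F$. By Mostow rigidity (valid for closed hyperbolic manifolds of dimension $\geq3$), $\Phi$ is homotopic to an isometry $u$. Concretely, choose a basepoint $x_0\in F$ and a path from $x_0$ to $\Phi(x_0)$ inside $F$. Then $\Phi_*$ is an automorphism of $\pi_1(X,x_0)=\Gamma<\rm{Isom}(\mathbb H^4)$, and Mostow gives $g\in\rm{Isom}(\mathbb H^4)$ with $\Phi_*(\gamma)=g\gamma g^{-1}$. The isometry $u$ is then induced by $g$, and since $\Phi$ preserves orientation, so does $u$.

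Because $\Phi$ preserves $F$, and $\pi_1(F)\to\pi_1(X)$ is injective (totally geodesic), $\Phi_*$ carries the subgroup $\Lambda=\pi_1(F,x_0)$, the stabilizer of a lift $\widetilde F\cong\mathbb H^2\subset\mathbb H^4$, to itself. Hence $g\Lambda g^{-1}=\Lambda$. The cocompact Fuchsian group $\Lambda$ has limit set the full circle $\partial\widetilde F$, which is the unique $\Lambda$-invariant totally geodesic plane. Therefore $g$ preserves $\widetilde F$, and $u$ preserves $F$. On $F$, the restriction $u|_F$ induces the same outer automorphism of $\pi_1(F)$ as $\phi$, since both restrict the conjugation by $g$ on $\Lambda$. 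By Dehn--Nielsen--Baer, $u|_F$ is isotopic to $\phi$. In particular $u|_F$ preserves orientation, because $\phi$ does and orientation is detected on $\rm{Out}(\pi_1F)$. So $u\in G_F$ and $[\phi]$ lies in the image.

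For the last clause, $u|_F$ is an isometry of $F$ with its induced hyperbolic metric and preserves orientation. If $\rm{Isom}^+(F)=\{1\}$, then $u|_F=\rm{id}$, so $[\phi]=1$.

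The main obstacle is the bookkeeping of basepoints and the identification of outer automorphism classes. One must check that the homotopy from Mostow is compatible with the subgroup $\Lambda$ up to conjugation by an element of $\Lambda$ (choosing the basepoint path inside $F$ handles this), so that the resulting element of $\rm{Out}(\pi_1F)$ is really that of $\phi$. One should also confirm that the centralizer of $\Lambda$ in $\rm{Isom}(\mathbb H^4)$ does not cause trouble: a reflection across $\widetilde F$ commutes with $\Lambda$ but restricts to the identity on $\widetilde F$, so it does not change $u|_F$.
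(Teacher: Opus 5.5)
Your proof is correct and follows essentially the same route as the paper: Mostow rigidity produces an isometry, the limit set of the cocompact Fuchsian stabilizer forces that isometry to preserve the lifted plane, and Dehn--Nielsen--Baer identifies the restricted mapping class with that of $\Phi|_F$. The only difference is bookkeeping. The paper matches $\Phi$ and the isometry only in $\mathrm{Out}(\Gamma)$ and then uses self-normalization $N_\Gamma(H)=H$ to pass to $\mathrm{Out}(H)$; your based form of Mostow, with the path chosen inside $F$, gives the conjugating element $g$ directly, so that step is unnecessary.
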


\begin{proof}
Write $X=\mathbb H^4/\Gamma$.  Choose a component
$P\cong\mathbb H^2$ of the inverse image of $F$, and set 
\[
 H:=\rm{Stab}_\Gamma(P).
\]
Then $F=P/H$.  Since $H$ is a cocompact Fuchsian group, its limit set
is $\partial_\infty P$; see \cite{Katok1992}.  The plane $P$ is the
hyperbolic convex hull of this circle.  It follows that
\begin{equation}
\label{eq:surface-self-normalizing}
 N_\Gamma(H)=H.
\end{equation}
Indeed, an element normalizing $H$ preserves its limit set, hence
preserves $P$, and therefore belongs to
$\rm{Stab}_\Gamma(P)=H$.

Let $[\varphi]\in\mathcal E^0(X,F)$ be represented by an
orientation-preserving homeomorphism
\[
 \Phi\colon(X,F)\rightarrow(X,F).
\]
A lift of $\Phi$ sends $P$ to another component of the inverse image
of $F$.  After composing the lift with a deck transformation, we may
assume that it preserves $P$.  Let
$\alpha\in\rm{Aut}(\Gamma)$ be the induced automorphism.
Then $\alpha(H)=H$.

By Mostow rigidity \cite{MostowStrongRigidity}, there is an
orientation-preserving isometry $u$ of $X$ inducing the same outer
automorphism of $\Gamma$ as $\Phi$.  After changing a lift of $u$ by a
deck transformation, we may assume that its induced automorphism
$\beta\in\rm{Aut}(\Gamma)$ also preserves $H$.  The lift
then preserves the limit set of $H$, and hence $P$, so $u(F)=F$.

The automorphisms $\alpha$ and $\beta$ determine the same outer
automorphism of $\Gamma$, so they differ by conjugation by some
$\gamma\in\Gamma$.  Since both preserve $H$, we have
$\gamma\in N_\Gamma(H)=H$.  Thus their restrictions to $H$ determine
the same element of $\rm{Out}^+(H)$.  Under the
identification $H\cong\pi_1(F)$, the Dehn--Nielsen--Baer theorem
\cite[Theorem~8.1]{FarbMargalit2012} gives
\[
 [\Phi|_F]=[u|_F]\in\M(F).
\]
In particular, $u\in G_F$, and hence
\[
 \mathcal E^0(X,F)
 \subseteq
 \rm{im}\bigl(G_F\rightarrow\M(F)\bigr).
\]
The reverse inclusions
\[
 \rm{im}\bigl(G_F\rightarrow\M(F)\bigr)
 \subseteq
 \mathcal E^\infty(X,F)
 \subseteq
 \mathcal E^0(X,F)
\]
are immediate.
\end{proof}

We next record the arithmetic input we will use to produce an ambient $4$--manifold.

\begin{lem}
\label{lem:two-step-arithmetic-embedding}
Let $F$ be a closed oriented arithmetic hyperbolic surface defined
over a number field $k\ne\Q$.  Then $F$ admits a totally geodesic
embedding in a closed oriented hyperbolic $4$--manifold.
\end{lem}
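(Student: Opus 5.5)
The plan is to realize the Fuchsian group of $F$ inside a cocompact arithmetic lattice of simplest type in $\rm{Isom}^+(\mathbb H^4)$ by extending a quadratic form. I will then use subgroup separability to pass to a torsion-free finite-index subgroup in which the image of $\mathbb H^2$ is embedded. "Two-step" refers to two stages: first build an arithmetic lattice that contains $\pi_1(F)$, and then promote this immersed totally geodesic copy of $F$ to an embedded one in a finite cover.

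\emph{Step 1 (quadratic form).} Write $F=\mathbb H^2/\Delta$, where $\Delta$ is an arithmetic Fuchsian group. Then $k$ is totally real, and $\Delta$ is commensurable with the norm-one group of an order in a quaternion algebra $A/k$ that is ramified at all real places but one. Via $\rm{PGL}_1(A)\cong\rm{SO}(q)$, with $q$ the ternary norm form on trace-zero elements, $\Delta$ becomes a subgroup of $\rm{SO}(q,k)$ commensurable with $\rm{SO}(q,\mathcal O_k)$. The form $q$ has signature $(2,1)$ at the identity embedding and is definite at the other real places. Set $q'=q\oplus\langle 1\rangle\oplus\langle 1\rangle$, after adjusting $q$ by a scalar so that it is negative definite at the nonidentity places. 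For this I add $\langle a\rangle\oplus\langle a\rangle$ with $a\in k$ positive at the identity place and of the definite sign elsewhere, which is possible by weak approximation. Then $q'$ has signature $(4,1)$ at the identity place and is definite at every other real place. Since $k\neq\Q$, there is at least one other real place, so $q'$ is anisotropic over $k$ and $\rm{SO}(q',\mathcal O_k)$ is a cocompact lattice. This is the one point where the hypothesis $k\neq\Q$ is used.

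\emph{Step 2 (an integral lattice stabilized by $\Delta$).} Let $L_0\subset k^3$ be an $\mathcal O_k$-lattice. Since $\Delta$ is commensurable with $\rm{SO}(q,L_0)$, the stabilizer of $L_0$ in $\Delta$ has finite index, so the $\Delta$-orbit of $L_0$ is finite. Their sum $L$ is therefore a $\Delta$-invariant $\mathcal O_k$-lattice. Put $L'=L\oplus\mathcal O_ke_4\oplus\mathcal O_ke_5$ and let $\Delta$ act trivially on the new summands. Then
\[
 \Delta\subset\Gamma_0:=\rm{SO}^+(q',L'),
\]
$\Gamma_0$ is a cocompact arithmetic lattice in $\rm{Isom}^+(\mathbb H^4)$, and $\Delta$ is exactly the stabilizer in $\Gamma_0\cap\rm{SO}(q\oplus 0)$ of the totally geodesic plane $P=\mathbb H^2\subset\mathbb H^4$, up to finite index. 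Consequently $\mathbb H^2/\Delta\to\mathbb H^4/\Gamma_0$ is an immersed totally geodesic copy of $F$ itself, not just of a cover of $F$.

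\emph{Step 3 (embedding in a finite cover; the main obstacle).} I must find a torsion-free, orientation-preserving finite-index subgroup $\Gamma\subset\Gamma_0$ with $\Delta\subset\Gamma$, $\rm{Stab}_\Gamma(P)=\Delta$, and $\gamma P\cap P=\emptyset$ whenever $\gamma\notin\Delta$. Then $F=P/\Delta$ embeds in $X=\mathbb H^4/\Gamma$, and $X$ is oriented because $\Gamma\subset\rm{SO}^+$.

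\begin{itemize}
\item The first tool is Scott's criterion. The stabilizer $\Delta_0$ of $P$ in $\Gamma_0$ contains $\Delta$ with finite index. Since $\Delta_0$ is quasiconvex, hence geometrically finite, it is separable in the simplest-type arithmetic lattice $\Gamma_0$ by Bergeron--Haglund--Wise. Using local finiteness of the translates of $P$ together with compactness of $F$, separability lets me exclude the finitely many double cosets $\Delta\gamma\Delta$ responsible for self-intersections.
\item The second tool is Selberg's lemma, applied via a principal congruence subgroup $\Gamma(\mathfrak p)$. The difficulty is that $\Delta$ may itself be incompatible with $\Gamma(\mathfrak p)$. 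I would therefore take $\Gamma=\Delta\cdot\Gamma(\mathfrak p)\cap\Gamma_1$, where $\Gamma_1$ is the separability subgroup, and then check that this group is torsion-free. The check should go as follows: $\Delta$ is torsion-free, and torsion in $\Delta\Gamma(\mathfrak p)$ reduces mod $\mathfrak p$ to the torsion-free image of $\Delta$, for $\mathfrak p$ large.
\end{itemize}

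I expect this last torsion-freeness and exact-stabilizer bookkeeping to be the delicate point. If it resists, my fallback is the standard route of first embedding $F$ in a hyperbolic $3$-manifold by the same method, with $q\oplus\langle a\rangle$. I would then repeat the method once more, which yields the two-step structure.
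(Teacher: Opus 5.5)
\paragraph{The gap: torsion-freeness in Step 3.}
Step 3 is where the lemma's real content lies: producing a \emph{manifold} that contains $F$ itself rather than a finite cover of $F$. Your argument there does not work. The image of $\Delta$ in $\Gamma_0/\Gamma(\mathfrak p)$ is a finite group, so it is not torsion-free. If $g=\delta n$ has finite order, the congruence $g\equiv\delta \pmod{\mathfrak p}$ gives no contradiction.

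In fact $\Delta\Gamma(\mathfrak p)$ typically does contain torsion. The lattice $\Gamma_0$ contains the order-two rotation $t_0=\mathrm{id}_L\oplus(-\mathrm{id})$ about $P$. By strong approximation, for all but finitely many $\mathfrak p$ the image of $\Delta$ contains $\Omega(q,\mathbb F_{\mathfrak p})$, and hence involutions of the same type as $\bar t_0$. The image of $\Gamma_0$ contains $\Omega(q',\mathbb F_{\mathfrak p})$, which conjugates one to the other. So some $\Gamma_0$-conjugate of $t_0$ lies in $\Delta\Gamma(\mathfrak p)$.

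Intersecting with $\Gamma_1$ does not repair this. Separability lets you exclude finitely many double cosets, namely those with $\gamma P\cap P\neq\emptyset$. The torsion of $\Gamma_0$, however, comes in infinitely many conjugates, whose fixed sets lie arbitrarily far from $P$. So you have not produced a torsion-free finite-index subgroup containing $\Delta$. Without one, you only get $F$ embedded in an orbifold, or, after intersecting with a torsion-free normal subgroup $N$, a finite cover $P/(\Delta\cap N)$ of $F$ embedded in a manifold. That is exactly what the lemma must avoid.

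Some additional input is needed. For example, a retraction $r\colon K\to\Delta$ from a finite-index $K\supseteq\Delta$ would suffice. Then $\Delta\cdot(\ker r\cap N)$ is a finite-index subgroup, and it is torsion-free: a torsion element $\delta n$ has $r(\delta n)=\delta$, which must be trivial. But this does not follow from separability of $\Delta$ as you have set things up.

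\paragraph{The fallback and the paper's route.}
Your fallback of running the construction twice in codimension one inherits the same gap at each stage. The paper does not carry out a separability argument at all. It uses Kolpakov--Reid--Slavich to place $\Delta$ in $\mathrm{SO}^+(q,k)$; this is your Step~1. It then applies Lemma~5.1 of Martelli--Riolo--Slavich twice, $F\hookrightarrow Y^3\hookrightarrow X^4$. The conclusion of that lemma is precisely an embedding of the given manifold itself, not a cover, into an orientable arithmetic manifold one dimension higher. Its compactness clause, together with $k\neq\Q$, gives closedness at each stage.

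\paragraph{A minor slip in Step 1.}
If $q$ is negative definite at the nonidentity places, then $q\oplus\langle1\rangle\oplus\langle1\rangle$ is indefinite there. Your second description is the correct one: add $\langle a\rangle\oplus\langle a\rangle$, with $a$ having the sign of $q$ at each nonidentity place.
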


\begin{proof}
Since $F$ has even dimension, its arithmetic lattice is of simplest
type and, after conjugation, lies in
\[
 \rm{SO}^+(q,k)\subset\rm{O}(q,k)
\]
for an admissible ternary quadratic form $q$ over $k$; see
\cite[Corollary~1.2 and Section~4.2]{KRSArithmeticEmbedding}.
Applying \cite[Lemma~5.1]{MartelliRioloSlavichSpin} gives a totally
geodesic embedding
\[
 F\lhook\joinrel\rightarrow Y^3,
\]
where $Y^3$ is an oriented arithmetic hyperbolic $3$--manifold defined
over the same field $k$, and its fundamental group again lies in the
$k$--points of the corresponding orthogonal group.  We may therefore
apply the same lemma once more to obtain a totally geodesic embedding
\[
 Y^3\lhook\joinrel\rightarrow X^4.
\]
The compactness clause in that lemma shows successively that $Y^3$ and
$X^4$ are compact, because the input is compact and $k\ne\Q$ at
each stage.  Moreover, the lemma embeds the input manifold itself,
rather than only a finite cover.  Thus the composite embeds the
original surface $F$ in a closed oriented hyperbolic $4$--manifold.
\end{proof}

\begin{proof}[Proof of Theorem~\ref{thm:c}]
Maclachlan constructs a torsion-free maximal arithmetic Fuchsian group
$\Gamma_0<\rm{PSL}(2, \R)$, defined over a totally
real cubic field, such that
\[
 F=\mathbb H^2/\Gamma_0
\]
is a closed surface of genus $40$
\cite[Theorem~3.1, Section~6, and
Example~6.1]{MaclachlanTorsion}. 

We claim that $\rm{Isom}^+(F)=\{1\}$.  Lifting isometries to
the universal cover gives
\[
 \rm{Isom}^+(F)
 \cong
 N_{\rm{PSL}(2, \R)}(\Gamma_0)/\Gamma_0;
\]
see \cite{Katok1992}.  Since $F$ is closed, this group is finite.
Hence the normalizer is a finite-index Fuchsian overgroup of
$\Gamma_0$, and therefore an arithmetic Fuchsian group in the same
commensurability class; see
\cite[Chapter~8]{MaclachlanReid2003}.  Maximality of $\Gamma_0$
forces
\[
 N_{\rm{PSL}(2, \R)}(\Gamma_0)=\Gamma_0,
\]
proving the claim.

The defining field of $F$ has degree three, so it is not $\Q$.
Lemma~\ref{lem:two-step-arithmetic-embedding} embeds $F$ totally
geodesically in a closed hyperbolic $4$--manifold $X$.  Proposition
\ref{prop:mostow-reduction} now gives $
 \mathcal E^\infty(X,F)=\mathcal E^0(X,F)=\{1\}.$
\end{proof}

\begin{remark}
\label{rem:hyperbolic-rigid-recipe}
The proof gives a general recipe.  Any closed oriented arithmetic
hyperbolic surface $F$, defined over a number field $k\ne\Q$ and
satisfying $\rm{Isom}^+(F)=\{1\}$, admits a totally geodesic embedding
in a closed hyperbolic $4$--manifold for which
\[
 \mathcal E^\infty(X,F)=\mathcal E^0(X,F)=\{1\}.
\]
The genus-$40$ surface above is one explicit input.  As pointed out to
us by Leone Slavich, varying the coefficients adjoined to the defining
quadratic form in the two extension steps should yield infinitely many
pairwise noncommensurable closed arithmetic hyperbolic $4$--manifolds,
each containing the same surface $F$ isometrically as a totally
geodesic submanifold.  By Mostow rigidity, these manifolds would be
pairwise nonhomeomorphic.

The ambient manifolds are obtained only through existence theorems, so
the construction provides no explicit combinatorial models for them.
Moreover, they are not simply connected and $F$ is topologically rigid
rather than flexible, so these examples do not answer
Question~\ref{que:flexible-rigid}.
\end{remark}

\begin{remark}
Every surface $F'\subset X$ topologically isotopic to the surface in
Theorem~\ref{thm:c} is again rigid in both categories.  Indeed,
Lemma~\ref{lem:extendable-conjugacy} identifies its topological
extendable group with $\mathcal E^0(X,F)=\{1\}$, and $\mathcal E^\infty(X,F')
 \subseteq
 \mathcal E^0(X,F')$ implies the same for the smooth extendable group.
This does not rule out exotic copies; it shows only that any such copy
would also be rigid in both categories.
\end{remark}

\section{Final comments}
\label{sec:questions}

We conclude with three natural questions.

\begin{question}
\label{que:flexible-rigid}
For every $g\geq 1$, does there exist a topologically flexible but
smoothly rigid embedding $\Sigma_g\hookrightarrow X$ in a closed
simply connected smooth $4$--manifold?
\end{question}

The genus-$1$ case of Theorem~\ref{thm:a} comes very close: the only nontrivial mapping class our argument does not exclude is the central element $-I$, represented by the hyperelliptic  involution.  In higher genus, $\ker\rho$ also contains the Torelli group.  The missing sign reflects the reciprocity (often called the symmetry) of the Alexander polynomial in the relative Seiberg--Witten invariants of rim-surgered surfaces.

It is plausible that, if the topological flexibility condition is
dropped, peripheral data associated to a nontrivial
surface-complement group may obstruct the remaining involution;
cf.\ \cite{Niu2026}. 

\begin{question}
\label{que:hyperbolic-exotic}
Let $F\subset X$ be a closed oriented totally geodesic surface in a
closed oriented hyperbolic $4$--manifold, and suppose that $F$ is
rigid in both categories.  Can $F$ admit an exotic copy?  In
particular, must every surface topologically isotopic to the
surface in Theorem~\ref{thm:c} be smoothly isotopic to it?
\end{question}

As Theorem~\ref{thm:b} provides only nested upper bounds
$P_i\supseteq\rho\bigl(\mathcal E^\infty(X_g,F_i)\bigr)$, we also ask:

\begin{question}
\label{que:nested-groups}
Can the surfaces in Theorem~\ref{thm:b} be chosen, with 
identifications $F_i\cong\Sigma_g$, so that we get 
\[
 \M(\Sigma_g)
 =
 \mathcal E^\infty(X_g,F_0)
 \supsetneq
 \mathcal E^\infty(X_g,F_1)
 \supsetneq\cdots\supsetneq
 \mathcal E^\infty(X_g,F_{2g})?
\]
Can such a sequence be infinite?
\end{question}

Strict nesting of the extendable groups would require realization results complementing our obstructions.  Continuing the present construction beyond the projectively rigid stage would require invariants finer than projective homology.

\medskip
\appendix
\section{Embedded surfaces are determined by their ambient symmetries}
\label{appendix:kleinwasright}

The classical reconstruction theorems of Whittaker and Filipkiewicz
show that a closed manifold is determined by its homeomorphism and
diffeomorphism groups in the TOP and DIFF categories
\cite{Whittaker1963,filipkiewicz1982}.  The corresponding relative
statement is a consequence of more general reconstruction results for
groups preserving submanifolds.  We record only the case needed here,
for surfaces in $4$--manifolds, and include the argument for
completeness.

For a closed $4$--manifold $X$ and a nonempty closed surface
$F\subset X$, set
\[
 \rm{Diff}(X,F)
 :=
 \{g\in\rm{Diff}(X)\mid g(F)=F\}
 \text{ and }
 \rm{Homeo}(X,F)
 :=
 \{g\in\rm{Homeo}(X)\mid g(F)=F\}.
\]
Here no orientation condition is imposed, and $F$ need not be orientable or connected.

\begin{theorem}
\label{thm:pair-reconstruction}
Let $F_i\subset X_i$, $i=1,2$, be nonempty closed, possibly
disconnected, surfaces in connected closed $4$--manifolds.  In the smooth category, every isomorphism
\[
 \Psi\colon
 \rm{Diff}(X_1,F_1)
 \xrightarrow{\;\cong\;}
 \rm{Diff}(X_2,F_2)
\]
has the form
$\Psi(g)=h  g  h^{-1}$ for some diffeomorphism of pairs
$h\colon(X_1,F_1)\to(X_2,F_2)$.  In the topological category, provided the $F_i$ are locally flat, the
analogous statement holds with homeomorphisms in place of
diffeomorphisms.
\end{theorem}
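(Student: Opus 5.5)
We will deduce Theorem~\ref{thm:pair-reconstruction} from the general reconstruction machinery of Rubin. Whittaker's and Filipkiewicz's proofs are special cases of it. The idea is to recover the pair $(X_i,F_i)$ from the abstract group $G_i=\rm{Diff}(X_i,F_i)$, respectively $\rm{Homeo}(X_i,F_i)$, by reading off the points of $X_i$ from purely group-theoretic data.

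\textbf{Step 1: Rubin's theorem for the action on $X_i$.} We check that $G_i$ acts on $X_i$ as a \emph{locally moving} group. This means that for every nonempty open set $U\subset X_i$, the rigid stabilizer
\[
 G_i(U)=\{g\in G_i\mid g=\rm{id}\text{ off }U\}
\]
is nontrivial. This is clear from bump-function constructions. If $U$ meets $F_i$, take a flow supported in a small chart $(\R^4,\R^2)$ tangent to $\R^2$. In the topological case, local flatness supplies the chart. If $U$ misses $F_i$, any compactly supported flow in $U$ works. The space $X_i$ is locally compact Hausdorff with no isolated points.

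Rubin's theorem then yields a unique homeomorphism $h\colon X_1\to X_2$ such that $\Psi(g)=hgh^{-1}$ for all $g$. The mechanism is that regular open sets are recovered as centralizer-type data of rigid stabilizers, and points as ultrafilter limits of these. We expect to cite this rather than reprove it.

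\textbf{Step 2: $h$ carries $F_1$ to $F_2$.} Since $h$ conjugates $G_1$ to $G_2$, it carries $G_1$-orbits to $G_2$-orbits. We claim the orbit decomposition of $X_i$ under $G_i$ is exactly the following: the union of the components of $F_i$ (grouped by diffeomorphism type, possibly finer) versus the components of $X_i\setminus F_i$. The key point is that no element of $G_i$ moves a point of $F_i$ off $F_i$. Hence $h(F_1)$ is a union of $G_2$-orbits.

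To pin it down, we distinguish points of $F$ from points off $F$ by a conjugation-invariant local property. For $x\in X_i\setminus F_i$, the germ of the point stabilizer acts transitively on directions, like a $4$-dimensional point. For $x\in F_i$, every element fixing $x$ preserves a $2$-dimensional locally closed set through $x$, namely the orbit of $x$.

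A cleaner route is by dimension: orbits in $F_i$ are $2$-dimensional and orbits in $X_i\setminus F_i$ are open. A homeomorphism preserves openness of orbits, and the complement of the open orbits is exactly $F_i$, which is nowhere dense. Hence $h(F_1)=F_2$.

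\textbf{Step 3: smoothness in the DIFF case.} It remains to show $h$ is a diffeomorphism when $\Psi$ is an isomorphism of diffeomorphism groups. Following Filipkiewicz, we argue as follows. $h$ conjugates all smooth flows supported in $X_1\setminus F_1$ into smooth homeomorphisms. Filipkiewicz's argument, via the Montgomery--Zippin/Bochner theorem on continuous one-parameter groups of diffeomorphisms, then shows $h$ is smooth on the open dense set $X_1\setminus F_1$, and likewise $h^{-1}$.

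Near $F_1$ we repeat this using flows in $G_1$ tangent to $F_1$. These exist in every direction at points of $F_1$, since the flows preserving $\R^2\subset\R^4$ still span all vector fields tangent to $\R^2$. Combining them with flows normal to $F$ whose vector fields vanish on $F$ gives enough local one-parameter families to run the Filipkiewicz argument at points of $F_1$.

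\textbf{Main obstacle.} We expect the main difficulty to be Step 3 at points of $F_1$. The available vector fields are constrained to be tangent to $F_1$, so one must check that they still generate local charts; in particular, the normal directions need vector fields that are nonvanishing near but off $F$. Our first approach is to compose time-$t$ maps of a transverse family of flows supported near $F_1$. From this we will show that $h$ is smooth on a punctured neighborhood with uniform bounds, and then extend smoothness across $F_1$.
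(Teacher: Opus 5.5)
Your Steps 1 and 2 are correct, and for the topological statement they are shorter than the paper's argument. Every nonempty open set contains a ball disjoint from $F_i$, so the full groups $\mathrm{Homeo}(X_i,F_i)$ are locally moving. Rubin's theorem therefore applies to $\Psi$ itself, and your open-orbit criterion then gives $h(F_1)=F_2$. The paper takes a longer path. It passes to identity components via Mann's automatic continuity and verifies the fragmentation and no-global-fixed-point hypotheses of Ben Ami--Rubin; the latter is where local flatness enters. It then recovers $\Psi$ on the whole group by a centralizer argument. Your route needs none of this, not even local flatness.

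The gap is Step 3 at points of $F_1$, and that is where the whole content of the smooth case lies.

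\emph{Why the flow-chart method stops at $F_1$.} Filipkiewicz's mechanism at a point $x$ needs one-parameter subgroups $\phi^1,\dots,\phi^4$ of $G_1$ such that $(t_1,\dots,t_4)\mapsto\phi^1_{t_1}\cdots\phi^4_{t_4}(x)$ is a local diffeomorphism at $0$. Every element of $G_1$ preserves $F_1$. So for $x\in F_1$ this map lands in $F_1$ and has rank at most $2$; the normal vector fields you propose to add vanish on $F_1$ and contribute nothing at $x$. For a base point $y\notin F_1$, the map never meets $F_1$ at all. The method therefore yields only two facts: $h$ is a diffeomorphism $X_1\setminus F_1\to X_2\setminus F_2$, and $h|_{F_1}$ is smooth.

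\emph{Why no family of flows can supply the bounds.} Your fallback (``uniform bounds on a punctured neighborhood, then extend'') has no source for those bounds. Work in the local model $\R^2\times\R^2$ and take $h(u,w)=(u,f(|w|)\,w/|w|)$ with $f(r)=(1-2\ln r)^{-1/2}$ near $0$. This homeomorphism is not even H\"older along $w=0$. Yet it conjugates the tangential translations, the normal rotation, and the normal scaling flow to smooth flows, provided the cutoffs are constant near $w=0$. The normal scaling flow goes to the flow of $|w|^2w\,\partial_w$, which has degenerate linearization. So the specific flows a Filipkiewicz-type argument would use cannot detect non-smoothness across $F$. Ruling out such $h$ requires using other elements of the group: for instance, this $h$ conjugates a normal shear to a map that is homogeneous of degree one but nonlinear, hence not differentiable at $F$. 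You would also need to show that $dh$ is invertible along $F_1$.

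The paper does not reprove any of this. It imports the smooth case from Rybicki's extension of Filipkiewicz's theorem to diffeomorphism groups preserving a positive-dimensional submanifold. You need either that citation or a genuine argument for smoothness of $h$ and $h^{-1}$ across $F_1$.
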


\begin{proof}
In the smooth category, Rybicki's refinement of Filipkiewicz's
reconstruction theorem yields the statement asserted in the theorem:  Main Theorem and
Example~7 in \cite{RybickiAdmissible} apply to diffeomorphism groups preserving a positive-dimensional submanifold.

For the topological statement, set
$H_i:=\rm{Homeo}(X_i,F_i)$, and let $H_i^0$ denote the identity
component of $H_i$ in the compact-open topology.  Mann proves that
every homomorphism from $H_i^0$ to a separable topological group is
continuous \cite[Theorem~4.7]{MannAutomatic}.  Since $H_2$ is
separable, the restriction
$\Psi|_{H_1^0}\colon H_1^0\to H_2$ is continuous, so its image lies in
$H_2^0$.  Applying the same argument to $\Psi^{-1}|_{H_2^0}$ gives
\[
 \Psi(H_1^0)=H_2^0.
\]

We now verify the two hypotheses of the reconstruction theorem of
Ben Ami--Rubin \cite[Theorem~I]{BenAmiRubin}.  For the first
hypothesis, let $\mathcal U$ be an open cover of $X_i$, and let
$K\leq H_i^0$ be the subgroup generated by elements supported in
members of $\mathcal U$.  After passing to a finite subcover, Mann's
relative fragmentation result
\cite[Proposition~2.3]{MannAutomatic} shows that the subgroup generated
by elements supported in members of this finite subcover contains an
open neighborhood $V$ of the identity.  This subgroup is contained in
$K$, so $V\subset K$.  Hence $K$ is open, being a union of translates
of $V$.  Every open subgroup of a topological group is also closed.
Since $H_i^0$ is connected, it follows that $K=H_i^0$.

For the second hypothesis, no point of $X_i$ is fixed by every element
of $H_i^0$: points outside $F_i$ can be moved by ball-supported
isotopies, while points of $F_i$ can be moved tangentially in locally
flat charts by compactly supported ambient isotopies preserving $F_i$
setwise.  The two hypotheses of Ben Ami--Rubin's theorem are therefore
satisfied, so it gives a homeomorphism $h\colon X_1\to X_2$ such that
\[
 \Psi(g)=hgh^{-1}
 \qquad\text{for every }g\in H_1^0.
\]

The surface itself is recovered from the orbit structure:
\[
 F_i
 =
 \{x\in X_i\mid H_i^0\cdot x
       \text{ is not open in }X_i\}.
\]
Indeed, the orbit of a point of $F_i$ remains in $F_i$, and hence is
not open in $X_i$, whereas ball-supported isotopies make every orbit
in $X_i\setminus F_i$ open.  Since $h$ conjugates the two actions,
\[
 h\bigl(H_1^0\cdot x\bigr)=H_2^0\cdot h(x).
\]
Thus $h$ preserves whether an orbit is open, and hence $h(F_1)=F_2$.

It remains to recover $\Psi$ on the full group.  Let $f\in H_1$ and
$b\in H_2^0$.  Since $\Psi(H_1^0)=H_2^0$, write
$b=\Psi(a)$ for some $a\in H_1^0$.  The identity component $H_1^0$ is
normal in $H_1$, so $faf^{-1}\in H_1^0$, and hence
\[
 \Psi(f)b\Psi(f)^{-1}
 =
 \Psi(faf^{-1})
 =
 h(faf^{-1})h^{-1}
 =
 (hfh^{-1})b(hfh^{-1})^{-1}.
\]
Thus $\Psi(f)$ and $hfh^{-1}$ induce the same conjugation on $H_2^0$.

Hence
\[
 c=(hfh^{-1})^{-1}\Psi(f)
\]
centralizes $H_2^0$.  We claim that $c$ is trivial.  If
$c\neq\rm{id}$, then, since $X_2\setminus F_2$ is dense, there is a
ball $B\subset X_2\setminus F_2$ such that
$B\cap c(B)=\varnothing$.  A nontrivial element of $H_2^0$ supported
in $B$ cannot then commute with $c$.  Thus
$\Psi(f)=hfh^{-1}$ for every $f\in H_1$.
\end{proof}

\begin{remark}
In both DIFF and TOP categories, the theorem reconstructs the
underlying \emph{unoriented} pair.  The same centralizer argument shows
that the realizing map $h$ is unique.  Chosen orientations of the
ambient manifold or of the surface are additional data not recorded
by these groups.
\end{remark}

Theorem~\ref{thm:pair-reconstruction} recovers the
ambient-equivalence class, but not the isotopy class, of an embedding.
Passing further to the induced mapping classes loses even more
information.  To illustrate these points, for a nonorientable surface $F$, let
$\M(F)=\pi_0(\rm{Diff}(F))$, and define
$\mathcal E^\infty(X,F)$ and $\mathcal E^0(X,F)$ as before.

\begin{prop}
\label{prop:mcg-forgets}
There are both orientable and nonorientable examples of topologically
isotopic but smoothly nonisotopic surfaces in closed simply connected
oriented $4$--manifolds whose smooth and topological extendable mapping
class groups all agree.  In the nonorientable case, there is an
infinite such family whose ambient pairs are pairwise nondiffeomorphic.
\end{prop}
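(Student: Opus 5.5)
The plan is to produce examples in which the extendable groups are as large as possible in both categories: all of $\M(F)$, or at least a common group computable from topology alone. Then any exotic knotting invisible to the homotopy-theoretic data still has the same extendable groups.

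\emph{Orientable case.} Start from a smoothly flexible pair with nonzero relative invariant, for instance the fiber $F_0\subset X_g$ of Lemma~\ref{lem:flexible-fibers} with $g=1$, so $F_0\subset E(2)$. We want a rim-surgered copy that is still smoothly flexible. Proposition~\ref{prop:flexible-profile} rules out ordinary rim surgery along a single curve with nontrivial Alexander polynomial, because that would give a positive-dimensional rim Newton polytope.

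Instead I would change the ambient pair in a way that keeps a fibration structure. One option is a knot surgery that keeps a fibration: take two disjoint parallel regular fibers $F_0$ and $F_0'$, and perform Fintushel--Stern knot surgery along a third fiber $T$ using a fibered knot $K$. The result $X_K$ is homeomorphic to $X$, but its Seiberg--Witten invariant changes by $\Delta_K$. The pairs $(X_K,F_0)$ and $(X,F_0)$ are then not diffeomorphic.

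This ambient change is not a surface knotting in a fixed $X$, so it needs repair. The cleaner route is to realize both surfaces in the same manifold $X\cong X_K$ up to diffeomorphism. The claim is false for general $X$, but there are cases where knot surgery does not change the diffeomorphism type of $X$ while changing the pair. Alternatively, one can use $X=E(2)\#\CP$ type stabilizations, where knot surgery becomes diffeomorphic to the original.

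In that stabilized setting the resulting fibers still sit in Lefschetz-type fibrations with full monodromy, so by Proposition~\ref{prop:full-monodromy} both are smoothly flexible. Hence all four groups equal $\M(F)$. The two surfaces are topologically isotopic by Boyer's theorem as in Lemma~\ref{lem:rim-topology}, since complements are simply connected and the homology classes agree. Smooth non-isotopy will come from a relative Seiberg--Witten computation done before stabilizing, or, if stabilization kills it, from a genus-type obstruction such as relative invariants of the blown-up pair.

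\emph{Nonorientable case.} I would use a family like the Finashin--Kreck--Viro type surfaces, or annulus twist/rim surgery on $\RP$ knots in $S^4$ summed into a simply connected manifold. Here $\M(F)$ for nonorientable $F$ of small genus, say $\RP$ where $\M=1$, is finite or even trivial. For $F=\RP$ (or $\#_k\RP$ with $k=2$, where $\M$ is small), $\mathcal E^\infty$ and $\mathcal E^0$ are squeezed between explicit realizations and the whole group. Take an infinite family of exotic $\RP$'s or Klein bottles, all topologically isotopic, distinguished by Seiberg--Witten invariants of double branched covers. When $\M(\RP)=1$ all extendable groups are trivially equal, which settles the nonorientable claim outright, and pairwise nondiffeomorphism of the ambient pairs is the established output of those constructions.

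The main obstacle is the orientable case: constructing two smoothly flexible but exotically knotted surfaces in the same manifold. I would first try to avoid it by taking genus where flexibility is forced, or by exhibiting both surfaces as fibers of two different full-monodromy fibrations on the same manifold, for example two Lefschetz fibrations on a common manifold whose fibers have different relative invariants.
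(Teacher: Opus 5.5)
Your nonorientable half is essentially the paper's argument. Since $\M(\RP)=\{1\}$, the smooth and topological extendable groups of any $\RP$-knot are trivially equal, so all you need is an infinite family of topologically isotopic $\RP$-knots whose pairs are pairwise nondiffeomorphic. That is a specific theorem, Miyazawa's family $P_n\subset S^4$, and you should cite it rather than ``those constructions.'' Your other candidates do not work as stated: Finashin--Kreck--Viro's $\#_{10}\RP$ and Klein bottles have nontrivial mapping class groups, and you never compute their extendable groups.

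The orientable half has a genuine gap. You aim for positive-genus, smoothly flexible, exotically knotted surfaces in a single manifold. That is much more than the statement asks, and your sketch does not produce them.
\begin{itemize}
\item Knot surgery changes the ambient manifold. A relative Seiberg--Witten computation done before stabilizing only compares surfaces in different manifolds.
\item Your repair by $\#\CP$ destroys the distinguishing data: after summing with $\CP$ the Seiberg--Witten invariants, including the relative ones, vanish. So nothing is left to separate the two tori, and you offer no replacement invariant.
\item The ``blown-up pair'' fallback does not help. Blowing up is $\#\CPb$, which keeps the invariants alive, but for that same reason it never identifies $E(2)_K\#n\CPb$ with $E(2)\#n\CPb$.
\item Flexibility is not really the obstacle: $F_0$ stays flexible in $E(2)_K$ by parallel transport inside $E(2)\setminus\nu T$. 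Distinguishing the surfaces is the obstacle.
\item Your last fallback, two full-monodromy fibrations on one manifold, is unsupported. By Proposition~\ref{prop:flexible-profile} their rim Newton profiles would consist only of points, so at best multiplicities or coefficients could separate them, and you give no such computation.
\end{itemize}

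The missing idea is to apply your own nonorientable trick in genus zero. Since $\M(S^2)=\{1\}$, any topologically isotopic but smoothly nonisotopic $2$-spheres in a closed simply connected $4$-manifold give orientable examples whose extendable groups are all trivial. In the orientable case the statement requires neither positive genus nor nondiffeomorphic pairs. The paper uses Baraglia's spheres $S_n=f^n(S)$, whose pairs are in fact all diffeomorphic via $f$; Auckly's families would also work.
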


\begin{proof}
The orientable examples are provided by Baraglia
\cite[Theorem~1.3(3)]{Baraglia2024}; see also
\cite[Proposition~2.1]{Auckly2023} for further infinite families.
For a suitable closed simply connected $4$--manifold $X$, Baraglia
constructs a diffeomorphism $f\colon X\to X$ and an embedded sphere
$S\subset X$ such that the spheres
\[
 S_n:=f^n(S),\qquad n\in\Z,
\]
are all topologically isotopic but pairwise smoothly nonisotopic.
Since $f^{m-n}$ carries $S_n$ to $S_m$, the pairs $(X,S_n)$ are all
diffeomorphic, and conjugation identifies their relative
diffeomorphism and homeomorphism groups.  Since
$\M(S^2)=\{1\}$, their smooth and topological extendable mapping class
groups agree vacuously.

For the nonorientable examples, Miyazawa constructs infinitely many
embeddings $\RP\cong P_n\subset S^4$ which are topologically isotopic,
while the pairs $(S^4,P_n)$ are pairwise nondiffeomorphic
\cite[Theorem~1.1]{Miyazawa2023}.  Their topological isotopies identify
the relative homeomorphism groups.  Since all $P_n$ have normal Euler
number $+2$, a diffeomorphism between two such pairs could not reverse
the orientation of $S^4$.  Thus
Theorem~\ref{thm:pair-reconstruction} implies that their relative
diffeomorphism groups are pairwise nonisomorphic.  Nevertheless,
$\M(\RP)=\{1\}$, so $
 \mathcal E^\infty(S^4,P_n)
 =
 \mathcal E^0(S^4,P_n)
 =
 \{1\}
$
for every $n$.
\end{proof}

Thus the  relative groups do not recover smooth isotopy (as expected), while extendable mapping class groups may fail even to distinguish infinitely many pairwise nondiffeomorphic pairs in a single topological isotopy
class.

\bigskip
\noindent {\bf Acknowledgements.}
The first author was partially supported by NSF grant DMS-2506431 and
a Simons Foundation Travel Grant.  Much of this work was completed
during the 2026 IAS/Park City Mathematics Institute program
\emph{Knotted Surfaces in Four-Manifolds}; the authors thank PCMI for
its support and the stimulating environment.  We thank Takuto Sato for
helpful conversations, Bruno Martelli and Leone Slavich for comments on
the hyperbolic constructions in Section~\ref{sec:hyperbolic}, and
Kathryn Mann for comments on the reconstruction results in
Appendix~\ref{appendix:kleinwasright}.  We are especially grateful to
Slavich for confirming the arithmetic embedding argument and
suggesting the refinement recorded in
Remark~\ref{rem:hyperbolic-rigid-recipe}.  We also thank Jin Miyazawa and Danny Ruberman for several helpful comments on an earlier draft, and Hee Jung Kim for clarifications concerning her work cited here.
\newpage
\bibliography{refs1}
\bibliographystyle{plain}

\end{document}